\documentclass[11pt]{amsart}
\usepackage[english]{babel}
\usepackage[T1]{fontenc}
\usepackage[latin1]{inputenc}
\usepackage{graphicx}
\usepackage{amsmath,amssymb,amsthm,mathrsfs,txfonts,ifthen,xr}
\usepackage{soul}
\usepackage{array, multirow}
\usepackage{stmaryrd}
\usepackage{color}
\usepackage{hyperref}
\usepackage{enumerate}
\usepackage[all]{xy}
\usepackage{tikz-cd}
\usetikzlibrary{decorations.pathreplacing}

\usepackage{ytableau}

\theoremstyle{plain}
\newtheorem{theo}{Theorem}[section]
\newtheorem*{theo*}{Theorem}

\newtheorem{proposition}[theo]{Proposition}

\theoremstyle{definition}
\newtheorem{definition}[theo]{Definition}

\newtheorem{notation}[theo]{Notation}

\theoremstyle{remark}
\newtheorem{remark}[theo]{Remark}

\def\A{{\rm A}}
\def\B{{\rm B}}
\def\C{{\rm C}}
\def\D{{\rm D}}

\def\F{{\rm F}}

\def\K{{\rm K}}

\def\O{{\rm O}}
\def\P{{\rm P}}
\def\p{{\rm p}}
\def\Q{{\rm Q}}
\def\q{{\rm q}}

\def\S{{\rm S}}

\def\T{{\rm T}}
\def\U{{\rm U}}
\def\V{{\rm V}}

\def\X{{\rm X}}
\def\Y{{\rm Y}}

\def\tr{{\rm tr}}

\def\Sp{{\rm Sp}}
\def\GL{{\rm GL}}	
\def\Id{{\rm Id}}

\def\Mat{{\rm Mat}}

\def\log{{\rm log}}

\def\Spec{{\rm Spec}}
\def\diag{{\rm diag}}

\allowdisplaybreaks
\title{Spectral Decomposition and Linearization of Kubo-Ando Means}
\author{Raluca Dumitru}
\address{Department of Mathematics and Statistics\\ University of North Florida \\ 1 UNF Drive \\ Jacksonville \\ FL 32224 \\ USA}
\email{raluca.dumitru@unf.edu}
\author{Jose Franco}
\address{Department of Mathematics and Statistics\\ University of North Florida \\ 1 UNF Drive \\ Jacksonville \\ FL 32224 \\ USA}
\email{jose.franco@unf.edu}
\author{Allan Merino}
\address{Department of Mathematics and Statistics\\ University of North Florida \\ 1 UNF Drive \\ Jacksonville \\ FL 32224 \\ USA}
\email{allan.merino@unf.edu}

\keywords{Kubo-Ando means, Linearization, Interpolation, Alternative Means}

\subjclass[2010]{Primary: 15A42; Secondary: 47A63.}

\date{}

\begin{document}

\begin{abstract}

In this paper, we study the structure of Kubo-Ando means on the cone of positive Hermitian matrices over the real numbers, complex numbers, and quaternions. Given a Kubo-Ando mean $\sigma$ with representing function $f$, we obtain an explicit decomposition of $\A \sigma \B$ in terms of the spectrum of $\A^{-1}\B$. More precisely, we show that $\A \sigma \B$ can be expressed as a finite linear combination of matrices of the form $\A\left(\A^{-1}\B\right)^{k}$, with coefficients depending only on $f$ and the eigenvalues of $\A^{-1}\B$. We first investigate the linear case and characterize the pairs of matrices for which every Kubo-Ando mean admits an affine representation. We then focus on the cone $\mathscr{P}_{3}(\mathbb{D})$, where we derive explicit formulas for the decomposition coefficients in terms of spectral invariants. Finally, we show that the same techniques extend to a broad class of alternative means, yielding explicit decompositions in the commutative setting and extending recent results of Choi, Kim, and Lim.

\end{abstract}

\maketitle

\tableofcontents

\section{Introduction}

In the study of operator means, the theory by Kubo and Ando \cite{KUBOANDO} sits at the center of it. In particular, the affine isomorphism between the set of operator means and the set of standardized operator  monotone functions offers an invaluable tool for the study of means. However, in order to compute these means, it is necessary to compute square roots, inverses, and the operator monotone functions themselves\,.

\noindent This motivates the question: Is it possible to find a simpler expression for an arbitrary Kubo-Ando mean? Several such results exist in the literature. For example, in the case when $\A\,, \B \in\mathrm{SL}(2,\mathbb{C})$ are positive definite, 
\begin{equation*}
\A\sharp \B = \frac{\A+\B}{\sqrt{\det(\A+\B)}}\,,
\end{equation*}
where 
\begin{equation*}
\A \sharp \B:= \A^{\frac{1}{2}}\left(\A^{-\frac{1}{2}}\B\A^{-\frac{1}{2}}\right)^{\frac{1}{2}}\A^{\frac{1}{2}}\,.
\end{equation*}
is the geometric mean of positive definite matrices in the Kubo-Ando sense. See \cite{CHOIKIMLIM, GanKimMer2026} and the references therein\,.

\noindent The main purpose of this paper is to obtain an explicit decomposition of Kubo-Ando means on the cones of positive definite real symmetric, complex Hermitian, and quaternionic Hermitian matrices. We first prove that if $\Spec(\A^{-1}\B) = \left\{\lambda_{1}\,, \lambda_{2}\right\}$, then for any Kubo-Ando mean $\sigma$ with representing function $f$, we get
\begin{equation*}
\A \sigma \B = \frac{\lambda_{1}f(\lambda_{2}) - \lambda_{2}f(\lambda_{1})}{\lambda_{1} - \lambda_{2}}\A + \frac{f(\lambda_{1}) - f(\lambda_{2})}{\lambda_{1} - \lambda_{2}}\B\,.
\end{equation*}
However, the previous condition on the spectrum is not necessary for a fixed Kubo-Ando mean. Indeed, for all $\A,\B \in \mathscr{P}_{n}(\mathbb{D})$ (with $\mathbb{D} \in \left\{\mathbb{R}\,, \mathbb{C}\,, \mathbb{H}\right\}$), the arithmetic mean $\A \nabla \B$ is always a linear combination of $\A$ and $\B$. Nevertheless, we prove the following characterization\,.

\begin{theo*}

Let $\left(\A\,, \B\right) \in \mathscr{P}_{n}(\mathbb{D})^{2}$. Let $\sigma$ be a Kubo-Ando mean whose representing function $f$ is not affine. Then $\A \sigma \B$ is linearizable if and only if $\left|\Spec(\A^{-1}\B)\right| \leq 2$\,.

\end{theo*}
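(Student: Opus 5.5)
Write $\mathrm{C}:=\A^{-\frac{1}{2}}\B\A^{-\frac{1}{2}}$. It is positive definite, hence unitarily diagonalizable with positive real eigenvalues. The identity $\A^{-1}\B=\A^{-\frac{1}{2}}\mathrm{C}\A^{\frac{1}{2}}$ shows that $\Spec(\A^{-1}\B)=\Spec(\mathrm{C})$ as sets. For $\mathbb{D}=\mathbb{H}$ we use the standard (right) eigenvalues, which can be taken real here, and the spectral theorem for quaternionic Hermitian matrices.

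By definition $\A\sigma\B=\A^{\frac{1}{2}}f(\mathrm{C})\A^{\frac{1}{2}}$. Linearizability means the existence of scalars $\alpha,\beta\in\mathbb{R}$ with $\A\sigma\B=\alpha\A+\beta\B$. Conjugating by $\A^{-\frac{1}{2}}$, this is equivalent to
\begin{equation*}
f(\mathrm{C})=\alpha\,\Id+\beta\,\mathrm{C}.
\end{equation*}
The whole statement is thereby reduced to a question about a single Hermitian matrix $\mathrm{C}$ and functional calculus.

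For the implication ($\Leftarrow$), suppose $\Spec(\mathrm{C})\subseteq\{\lambda_1,\lambda_2\}$.
\begin{itemize}
\item If the two eigenvalues are distinct, let $p(t)=\alpha+\beta t$ be the Lagrange interpolation polynomial with $p(\lambda_i)=f(\lambda_i)$. This gives exactly the coefficients displayed in the introduction. Since $\mathrm{C}=U\diag(\mu_1,\dots,\mu_n)U^{*}$ with each $\mu_j\in\{\lambda_1,\lambda_2\}$, we get $f(\mathrm{C})=U\diag(f(\mu_j))U^{*}=U\diag(p(\mu_j))U^{*}=p(\mathrm{C})$.
\item If there is only one eigenvalue, then $\mathrm{C}=\lambda\Id$ and $f(\mathrm{C})=f(\lambda)\Id$, which is linear with $\beta=0$.
\end{itemize}
Conjugating back gives $\A\sigma\B=\alpha\A+\beta\B$.

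For the implication ($\Rightarrow$), suppose $f(\mathrm{C})=\alpha\Id+\beta\mathrm{C}$. Diagonalize $\mathrm{C}$ in an eigenbasis and compare diagonal entries. This gives $f(\lambda)=\alpha+\beta\lambda$ for every $\lambda\in\Spec(\mathrm{C})$. Hence the graph of $f$ and the line $t\mapsto\alpha+\beta t$ meet at every eigenvalue.

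The key step, and the main obstacle, is to show that a non-affine operator monotone $f:(0,\infty)\to(0,\infty)$ agrees with an affine function at no more than two points. I would use the L\"owner representation
\begin{equation*}
f(t)=a+bt+\int_{(0,\infty)}\frac{t(1+s)}{t+s}\,d\mu(s),
\end{equation*}
with $\mu$ a positive measure that is nonzero because $f$ is not affine. Each integrand $t\mapsto \frac{t(1+s)}{t+s}$ is strictly concave on $(0,\infty)$. Therefore $f$ is strictly concave, although mere concavity of operator monotone functions is already known.

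A strictly concave function minus an affine function is strictly concave. A strictly concave function has at most two zeros: three zeros $t_1<t_2<t_3$ would contradict strict concavity at $t_2$. Therefore $|\Spec(\A^{-1}\B)|\le 2$.

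The point needing care is strict concavity, namely ensuring that $\mu\neq0$ when $f$ is non-affine, including the possible contribution of a point mass at $s=0$ or at $\infty$. The limiting mass at $s=0$ has integrand constant $1$, which is absorbed into $a$, and the mass at infinity is linear in $t$, absorbed into $bt$. So any nonaffine part must come from a measure on $(0,\infty)$, and each such integrand is strictly concave.
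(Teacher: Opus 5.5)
Your proof is correct and follows the paper's own route. You reduce linearizability to $f(\X)=a\Id_{n}+b\X$ for $\X=\A^{-\frac{1}{2}}\B\A^{-\frac{1}{2}}$, use strict concavity of a non-affine $f$ to allow at most two eigenvalues, and obtain the converse from two-point Lagrange interpolation (or the trivial case of a single eigenvalue). Your L\"owner-representation argument for strict concavity fills in a step that the paper only asserts.
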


\noindent We then obtain a more general decomposition of $\A \sigma \B$, involving more than just $\A$ and $\B$. Let $\A,\B \in \mathscr{P}_{n}(\mathbb{D})$ be such that
\begin{equation*}
\Spec(\A^{-1}\B) = \left\{\lambda_{1}\,, \lambda_{2}\,, \ldots\,, \lambda_{r}\right\}\,.
\end{equation*}
Then
\begin{equation*}
\A \sigma \B=\A^{\frac{1}{2}}f(\X)\A^{\frac{1}{2}}\,, \qquad \left(\X = \A^{-\frac{1}{2}}\B\A^{-\frac{1}{2}}\right)\,,
\end{equation*}
and there exists a unique polynomial $\P$ of degree $r-1$ such that
\begin{equation*}
f(\X)=\P(\X)\,.
\end{equation*}
Using Lagrange interpolation, we obtain an explicit description of the coefficients of $\P$, which yields the following finite decomposition of $\A \sigma \B$\,.

\begin{theo*}

Let $\A\,, \B \in \mathscr{P}_{n}(\mathbb{D})$ such that $\left|\Spec(\A^{-1}\B)\right| = r$. Then 
\begin{equation*}
\A \sigma \B = \sum\limits_{k = 0}^{r-1} c_{k}\A\left(\A^{-1}\B\right)^{k}\,,
\end{equation*}
where the scalars $c_{k}$ are given by
\begin{equation*}
c_{k} = (-1)^{r-1-k}\sum\limits_{i = 1}^{r} f(\lambda_{i})\frac{e_{r-1-k}(\widehat{\lambda_{i}})}{\prod\limits_{j \neq i}(\lambda_{i} - \lambda_{j})}\,, \qquad \left(0 \leq k \leq r-1\right)\,,
\end{equation*}
with $\widehat{\lambda_{i}} = \left(\lambda_{1}\,, \ldots\,,\lambda_{i-1}\,, \lambda_{i+1}\,, \ldots\,, \lambda_{r}\right)$, and where $e_{i}$ is the elementary symmetric polynomial of degree $i$\,.

\end{theo*}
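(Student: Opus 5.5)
We work with $\X = \A^{-\frac{1}{2}}\B\A^{-\frac{1}{2}}$, for which $\A \sigma \B = \A^{\frac{1}{2}}f(\X)\A^{\frac{1}{2}}$. The first step is to note that $\X$ is similar to $\A^{-1}\B$: indeed $\A^{-1}\B = \A^{-\frac{1}{2}}\X\A^{\frac{1}{2}}$. Hence $\Spec(\X) = \Spec(\A^{-1}\B) = \{\lambda_1,\ldots,\lambda_r\}$, with the $\lambda_i>0$ distinct. Since $\X$ is Hermitian positive definite over $\mathbb{D}$, it is unitarily diagonalizable (over $\mathbb{H}$ we use the quaternionic spectral theorem with real eigenvalues). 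Write $\X = \mathrm{U}\,\diag(\mu_1,\ldots,\mu_n)\,\mathrm{U}^{*}$ with each $\mu_s \in \{\lambda_1,\ldots,\lambda_r\}$. By the functional calculus, $f(\X) = \mathrm{U}\,\diag(f(\mu_1),\ldots,f(\mu_n))\,\mathrm{U}^{*}$.

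Next take the Lagrange interpolation polynomial
\begin{equation*}
\P(x) = \sum_{i=1}^{r} f(\lambda_i)\prod_{j\neq i}\frac{x-\lambda_j}{\lambda_i-\lambda_j}\,.
\end{equation*}
It has real coefficients, degree at most $r-1$, and satisfies $\P(\lambda_i)=f(\lambda_i)$ for every $i$. Since $\P$ has real coefficients, $\P(\X) = \mathrm{U}\,\diag(\P(\mu_s))\,\mathrm{U}^{*}$, and this equals $f(\X)$. Real coefficients matter in the quaternionic case, where real scalars are central, so $\mathrm{U}\mathrm{D}^k\mathrm{U}^*$ behaves as usual. To read off the coefficients, expand
\begin{equation*}
\prod_{j\neq i}(x-\lambda_j) = \sum_{k=0}^{r-1}(-1)^{r-1-k}e_{r-1-k}(\widehat{\lambda_i})\,x^{k}\,,
\end{equation*}
which is Vieta's formula. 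This gives $\P(x)=\sum_k c_k x^k$ with exactly the stated $c_k$.

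Finally, transport the powers. We have $\A^{\frac{1}{2}}\X^{k}\A^{\frac{1}{2}} = \A^{\frac{1}{2}}\bigl(\A^{\frac{1}{2}}\A^{-1}\B\A^{-\frac{1}{2}}\bigr)^{k}\A^{\frac{1}{2}}$. Using $\X = \A^{\frac{1}{2}}(\A^{-1}\B)\A^{-\frac{1}{2}}$, this equals $\A^{\frac{1}{2}}\A^{\frac{1}{2}}(\A^{-1}\B)^{k}\A^{-\frac{1}{2}}\A^{\frac{1}{2}} = \A(\A^{-1}\B)^{k}$. Summing with the real scalars $c_k$ gives $\A\sigma\B = \sum_{k=0}^{r-1}c_k\A(\A^{-1}\B)^k$.

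There is no serious obstacle here. The only points needing care are two. First, $f(\X)$ depends only on the values of $f$ on the spectrum, which requires diagonalizability; the argument would fail for non-normal matrices with repeated eigenvalues, but that cannot occur since $\X$ is Hermitian. Second, the sign and index bookkeeping in the Vieta expansion has to come out right.
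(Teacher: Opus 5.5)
Your proposal is correct and follows essentially the same route as the paper. You pass to $\X = \A^{-\frac{1}{2}}\B\A^{-\frac{1}{2}}$, replace $f(\X)$ by the Lagrange interpolation polynomial on $\Spec(\X)$ via the spectral decomposition, read off the $c_k$ by Vieta's formulas, and transport powers back using $\A^{\frac{1}{2}}\X^{k}\A^{\frac{1}{2}} = \A(\A^{-1}\B)^{k}$. The paper writes that last quantity as $(\B\A^{-1})^{k-1}\B$ before rewriting it, so your conjugation identity $\X = \A^{\frac{1}{2}}(\A^{-1}\B)\A^{-\frac{1}{2}}$ reaches it slightly more directly; your remark about real coefficients in the quaternionic case is a useful precision the paper leaves implicit.
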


\noindent We also perform explicit computations in the case $n=3$. Using Cardano's formula together with the Cayley-Hamilton theorem, we obtain a description of $c_{0}\,, c_{1}\,, c_{2}$ using only $\det(\X)$, $\tr(\X)$, and $\tr(\X^{2})$\,.

\noindent We also prove an analogue of the previous decomposition theorem for alternative means under the additional assumption that $\A$ and $\B$ commute. This yields explicit decompositions without imposing any restriction on the cardinality of $\Spec(\X)$, and recovers and extends several results of \cite{CHOIKIMLIM}. More precisely, we establish the following theorem\,.

\begin{theo*}

Let $\hat{\sigma}_{f}$ be the alternative mean associated with a function $f$, i.e.
\begin{equation*}
\A \hat{\sigma}_{f} \B = f(\A^{-1} \sharp \B) \A f(\A^{-1} \sharp \B)\,, \qquad \left(\A\,, \B \in \mathscr{P}_{n}(\mathbb{D})\right)\,.
\end{equation*}
Suppose that $\A\,, \B \in \mathscr{P}_{n}(\mathbb{D})$ are such that $\A\B = \B\A$ and $r = \left|\Spec(\A^{-1}\B)\right|$. Then there exists coefficients $d_{0}\,, d_{1}\,, \ldots\,, d_{r-1}$, depending on $f$ and $\Spec(\A^{-1}\B)$ only, such that

\begin{equation*}
\A \hat{\sigma}_{f} \B = \sum\limits_{i=0}^{r-1} \sum\limits_{j=0}^{r-1} d_{i}d_{j}\A(\A^{-1}\B)^{\frac{i+j}{2}}\,.
\end{equation*}

\end{theo*}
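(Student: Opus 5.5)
Since $\A$ and $\B$ commute, the plan is to reduce everything to functional calculus of a single positive matrix and then interpolate $f$ on a finite spectrum, exactly as in the earlier Lagrange interpolation decomposition. The key point is that $\A$, $\B$, $\A^{-1}$ and $\A^{-1}\B$ all commute and are simultaneously unitarily diagonalizable over $\mathbb{D}$. Hence the geometric mean simplifies to
\begin{equation*}
\A^{-1}\sharp\B=(\A^{-1}\B)^{\frac{1}{2}}=\A^{-\frac{1}{2}}\B^{\frac{1}{2}}.
\end{equation*}
I would verify this directly from the definition $\A^{-1}\sharp\B=\A^{-\frac12}\bigl(\A^{\frac12}\B\A^{\frac12}\bigr)^{\frac12}\A^{-\frac12}$: since $\A^{\frac12}\B\A^{\frac12}=\A\B$ when the matrices commute, we get $(\A\B)^{\frac12}=\A^{\frac12}\B^{\frac12}$ and the formula follows. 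Set $\Y=(\A^{-1}\B)^{\frac12}$. This is a positive matrix commuting with $\A$, and $\Spec(\Y)=\{\mu_1,\ldots,\mu_r\}$ with $\mu_i=\lambda_i^{1/2}$, where $\Spec(\A^{-1}\B)=\{\lambda_1,\ldots,\lambda_r\}$. Because $t\mapsto t^{1/2}$ is injective on $(0,\infty)$, the $\mu_i$ are pairwise distinct and there are exactly $r$ of them.

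Next I would apply Lagrange interpolation at the nodes $\mu_1,\ldots,\mu_r$. This gives a unique polynomial $\Q(t)=\sum_{i=0}^{r-1}d_it^i$ of degree at most $r-1$ with $\Q(\mu_i)=f(\mu_i)$ for all $i$. Since $\Y$ is diagonalizable with spectrum $\{\mu_i\}$, we have $f(\Y)=\Q(\Y)$; the argument is the same as in the previous theorem, comparing both sides on each eigenspace of $\Y$. The coefficients are given explicitly by the same formula as the $c_k$, with $\lambda_i$ replaced by $\lambda_i^{1/2}$:
\begin{equation*}
d_k=(-1)^{r-1-k}\sum_{i=1}^{r}f(\lambda_i^{1/2})\frac{e_{r-1-k}(\widehat{\mu_i})}{\prod_{j\neq i}(\mu_i-\mu_j)}.
\end{equation*}
In particular the $d_k$ depend only on $f$ and $\Spec(\A^{-1}\B)$.

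To finish, I would expand
\begin{equation*}
\A\hat{\sigma}_f\B=f(\Y)\A f(\Y)=\sum_{i,j}d_id_j\,\Y^i\A\Y^j.
\end{equation*}
Since $\Y$ is a function of $\A^{-1}\B$, which commutes with $\A$, each term becomes $\A\Y^{i+j}=\A(\A^{-1}\B)^{\frac{i+j}{2}}$. This gives the claimed double sum.

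The main obstacle is rigor about the square roots rather than the algebra. One must check that $\A^{-1}\B$ is positive in the commuting case, so that $(\A^{-1}\B)^{1/2}$ makes sense as the positive square root in the quaternionic setting as well. One must also check that $\A^{-1}\sharp\B$ coincides with that square root. Both follow once we have simultaneous diagonalization over $\mathbb{D}$ by a common unitary (for $\mathbb{H}$, via the standard spectral theorem for commuting quaternionic Hermitian matrices, or by passing to the complex representation). I would state and prove this as a short lemma first. A minor related point is that the phrase ``depending on $\Spec$ only'' requires using the positive square roots $\mu_i$ consistently as the interpolation nodes.
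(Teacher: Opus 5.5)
Your proposal is correct and follows essentially the same route as the paper: use $\A\B=\B\A$ to identify $\A^{-1}\sharp\B$ with $\Y=(\A^{-1}\B)^{\frac{1}{2}}$, interpolate $f$ at the $r$ distinct nodes $\sqrt{\lambda_{i}}$ to get $f(\Y)=\sum_{i} d_{i}(\A^{-1}\B)^{\frac{i}{2}}$, and use that $\Y$ commutes with $\A$ to collapse $f(\Y)\A f(\Y)$ into the double sum. The only difference is that you explicitly justify the square-root identity, the positivity of $\A^{-1}\B$, and the simultaneous diagonalization (including for $\mathbb{H}$), whereas the paper takes these for granted.
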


\noindent Finally, we establish several compatibility results for the polynomial decomposition under the embedding
\begin{equation*}
\mathscr{P}_{n}(\mathbb{C}) \hookrightarrow \mathscr{P}_{2n}(\mathbb{R})\,,
\end{equation*}
introduced in \cite{FRANCOMERINO2}\,.

\section{Preliminaries}

Let $\mathbb{D} \in \left\{\mathbb{R}\,, \mathbb{C}\,, \mathbb{H}\right\}$. For all $x \in \mathbb{D}$, we denote by $\overline{x}$ the conjugate of $x$, i.e. for all $a\,, b\,, c\,, d \in \mathbb{R}$, we have
\begin{equation*}
\overline{a} = a\,, \qquad \overline{a+ib} = a-ib\,, \qquad \overline{a+ib+jc+kd} = a-ib-jc-kd\,.
\end{equation*} 
Let $\B: \mathbb{D}^{n} \times \mathbb{D}^{n} \to \mathbb{D}$ be the form given by
\begin{equation*}
\B(x\,, y) = \sum\limits_{i = 1}^{n}x_{i}\overline{y_{i}}\,.
\end{equation*}
The form $\B$ is Hermitian and positive-definite, i.e. for all $x\,, y \in \mathbb{D}^{n}$ and $z \in \mathbb{D}^{n}$ non-zero, we have
\begin{equation*}
\B(y\,, x) = \overline{\B(x\,, y)}\,, \qquad \B(z\,, z) > 0\,.
\end{equation*}

\begin{notation}

\noindent We denote by $\Mat_{n}(\mathbb{D})$ the set of $n$ by $n$ matrices with entries in $\mathbb{D}$, and by $\GL_{n}(\mathbb{D})$ the set of invertible matrices in $\Mat_{n}(\mathbb{D})$. Moreover, we denote by 
\begin{equation*}
\exp_{\mathbb{D}}: \Mat_{n}(\mathbb{D}) \ni \X \to \sum\limits_{k = 0}^{\infty} \frac{\X^{k}}{k!} \in \GL_{n}(\mathbb{D})
\end{equation*}
the corresponding exponential map\,.

\end{notation}

\noindent For all $\A \in \Mat_{n}(\mathbb{D})$, we denote by $\A^{*}$ the adjoint of $\A$ with respect to the form $\B$, i.e.
\begin{equation*}
\B(\A x\,, y) = \B(x\,, \A^{*}y)\,, \qquad \left(x\,, y \in \mathbb{D}^{n}\right)\,.
\end{equation*}
One can see easily see that $\A^{*} = \overline{\A}^{t}$. We denote by $\U_{\mathbb{D}}$ the subgroup of $\GL_{n}(\mathbb{D})$ given by
\begin{equation*}
\U_{\mathbb{D}} = \left\{g \in \GL_{n}(\mathbb{D})\,, gg^{*} = \Id_{n}\right\}\,.
\end{equation*}
and let $\mathfrak{u}_{\mathbb{D}}$ be the Lie algebra of $\U_{\mathbb{D}}$, i.e.
\begin{equation*}
\mathfrak{u}_{\mathbb{D}} = \left\{\X \in \Mat_{n}(\mathbb{D})\,, \X^{*} = -\X\right\}\,.
\end{equation*}
In particular, using the notations of \cite{KNAPP}, we have
\begin{equation*}
\U_{\mathbb{D}} = \begin{cases} \O(n) & \text{ if } \mathbb{D} = \mathbb{R} \\ \U(n) & \text{ if } \mathbb{D} = \mathbb{C} \\ \Sp(n) & \text{ if } \mathbb{D} = \mathbb{H} \end{cases}
\end{equation*}

\begin{definition}

Let $\A \in \Mat_{n}(\mathbb{D})$. We say that
\begin{itemize}
\item $\A$ is Hermitian if $\A = \A^{*}$\,,
\item $\A$ is positive (and write $\A > 0$) if $x^{*}\A x > 0$ for all non-zero $x \in \mathbb{D}^{n}$\,.
\item $\A$ is positive semidefinite (and write $\A \geq 0$) if $x^{*}\A x \geq 0$ for all non-zero $x \in \mathbb{D}^{n}$\,.
\end{itemize}

\end{definition}

\noindent We denote by $\mathfrak{p}_{n}(\mathbb{D})$ the set of hermitian matrices. In particular, we get
\begin{equation*}
\Mat_{n}(\mathbb{D}) = \mathfrak{u}_{\mathbb{D}} \oplus \mathfrak{p}_{n}(\mathbb{D})\,.
\end{equation*}
Let $\mathscr{P}^{0}_{n}(\mathbb{D})$ and $\mathscr{P}_{n}(\mathbb{D})$ the subsets of $\mathfrak{p}_{n}(\mathbb{D})$ given by
\begin{equation*}
\mathscr{P}^{0}_{n}(\mathbb{D}) = \left\{\X \in \mathfrak{p}_{n}(\mathbb{D})\,, \X \geq 0\right\}\,, \qquad \mathscr{P}_{n}(\mathbb{D}) = \left\{\X \in \mathfrak{p}_{n}(\mathbb{D})\,, \X > 0\right\}\,.
\end{equation*}
It is well-known (see \cite{BHATIA}) that $\exp_{\mathbb{D}}(\mathfrak{p}_{n}(\mathbb{D})) \subseteq \mathscr{P}_{n}(\mathbb{D})$ and that the corresponding map
\begin{equation*}
\exp_{\mathbb{D}}: \mathfrak{p}_{n}(\mathbb{D}) \to \mathscr{P}_{n}(\mathbb{D})
\end{equation*}
is bijective. We denote by $\log_{\mathbb{D}}: \mathscr{P}_{n}(\mathbb{D}) \to \mathfrak{p}_{n}(\mathbb{D})$ the inverse of $\exp_{\mathbb{D}}$\,.

\begin{notation}

\begin{itemize}
\item For all $s \in \mathbb{R}$ and $\X \in \mathscr{P}_{n}(\mathbb{D})$, we denote by $\X^{s}$ the element of $\mathscr{P}_{n}(\mathbb{D})$ given by
\begin{equation*}
\X^{s} := \exp_{\mathbb{D}}\left(s\log_{\mathbb{D}}(\X)\right)\,.
\end{equation*}
\item We denote by $\preceq$ the Loewner order on $\mathscr{P}_{n}(\mathbb{D})$. It is a partial order defined by
\begin{equation*}
\X \preceq \Y \quad \Leftrightarrow \quad \Y - \X \in \mathscr{P}^{0}_{n}(\mathbb{D})\,, \qquad \left(\X\,, \Y \in \mathscr{P}_{n}(\mathbb{D})\right)\,.
\end{equation*}
\end{itemize}

\end{notation}

\begin{remark}

\begin{enumerate}
\item For a matrix $\X \in \Mat_{n}(\mathbb{D})$, we denote by $\Spec(\X)$ the spectrum of $\X$, i.e. the set of eigenvalues. In the quaternionic case, we will work with right eigenvalues, i.e. $\lambda \in \mathbb{H}$ such that
\begin{equation*}
\X v = v\lambda
\end{equation*}
for non-zero vector $v \in \mathbb{H}^{n}$. As explained in \cite{ZHANG}, in the quaternionic setting, if $\lambda \in \Spec(\X)$, then $\gamma^{-1}\lambda\gamma \in \Spec(\X)$ for all $\gamma \in \mathbb{H}^{*}$. However, for hermitian matrices, we have $\Spec(\X) \subseteq \mathbb{R}$, and a matrix $\X \in \mathfrak{p}_{n}(\mathbb{D})$ has at most $n$ different eigenvalues\,.
\item For all $\X\,, \Y \in \mathscr{P}_{n}(\mathbb{D})$, we have $\X^{-\frac{1}{2}}\Y\X^{-\frac{1}{2}} \in \mathscr{P}_{n}(\mathbb{D})$. Even if $\X^{-1}\Y$ is not in $\mathscr{P}_{n}(\mathbb{D})$ in general, we have
\begin{equation*}
\Spec(\X^{-\frac{1}{2}}\Y\X^{-\frac{1}{2}}) = \Spec(\X^{-1}\Y)\,.
\end{equation*}
Indeed,
\begin{equation*}
\X^{-1}\Y = \X^{-\frac{1}{2}}\left(\X^{-\frac{1}{2}}\Y\X^{-\frac{1}{2}}\right)\X^{\frac{1}{2}}\,,
\end{equation*}
so $\X^{-1}\Y$ and $\X^{-\frac{1}{2}}\Y\X^{-\frac{1}{2}}$ are similar\,.
\item (Spectral decomposition) Let $\X \in \mathscr{P}_{n}(\mathbb{D})$. Then $\X$ can be written of the form
\begin{equation*}
\X = \U\Lambda\U^{*}\,, 
\end{equation*}
with $\U \in \U_{\mathbb{D}}$ and $\Lambda = \diag\left(\lambda_{1}\,, \ldots\,, \lambda_{n}\right)$, where $\lambda_{i} \in \left(0\,, \infty\right)$ are the eigenvalues of $\X$\,.
\item The spectral decomposition plays an important role in functional calculus. Indeed, if $f: \left(0\,, \infty\right) \to \mathbb{R}$ is a continuous function, then $f$ can be extended to $\mathscr{P}_{n}(\mathbb{D})$ by
\begin{equation*}
f(\X) = \U f(\Lambda)\U^{*}\,,
\end{equation*}
with $f(\Lambda) = \diag\left(f(\lambda_{1})\,, \ldots\,, f(\lambda_{n})\right)$\,.
\end{enumerate}

\label{RemarkPreliminaries}

\end{remark}

\noindent We can now define the notion of Kubo-Ando means on $\mathscr{P}_{n}(\mathbb{D})$ (see \cite{KUBOANDO})\,.

\begin{definition}

A Kubo-Ando mean on $\mathscr{P}_{n}(\mathbb{D})$ is a continuous map
\begin{equation*}
\sigma: \mathscr{P}_{n}(\mathbb{D}) \times \mathscr{P}_{n}(\mathbb{D}) \mapsto \mathscr{P}_{n}(\mathbb{D})
\end{equation*}
such that
\begin{enumerate}
\item If $\A \preceq \B$ and $\C \preceq \D$, then $\A \sigma \C \preceq \B \sigma \D$\,,
\item For all $g \in \GL_{n}(\mathbb{D})$ and $\A\,, \B \in \mathscr{P}_{n}(\mathbb{D})$, we have
\begin{equation*}
g\left(\A \sigma \B\right)g^{*} = \left(g \A g^{*}\right) \sigma \left(g \B g^{*}\right)\,.
\end{equation*}
\item $\Id_{n} \sigma \Id_{n} = \Id_{n}$\,.
\end{enumerate}

\end{definition}

\noindent One central result of \cite{KUBOANDO} is summarized in the next theorem\,.

\begin{theo}

There exists a bijection between Kubo-Ando $\sigma$ on $\mathscr{P}_{n}(\mathbb{D})$ and the set of operator monotone functions $f: \left(0\,, +\infty\right) \to \left(0\,, +\infty\right)$ such that $f(1) = 1$. More precisely, for all mean $\sigma$ on $\mathscr{P}_{n}(\mathbb{D})$, there exists a unique operator monotone function $f: \left(0\,, +\infty\right) \to \left(0\,, +\infty\right)$, with $f(1) = 1$, satisfying
\begin{equation}
\A \sigma \B = \A^{\frac{1}{2}} f\left(\A^{-\frac{1}{2}}\B\A^{-\frac{1}{2}}\right)\A^{\frac{1}{2}}\,, \qquad \left(\A\,, \B \in \mathscr{P}_{n}(\mathbb{D})\right)\,.
\label{EquationKuboAndoSigma}
\end{equation}

\label{TheoremKuboAndo}

\end{theo}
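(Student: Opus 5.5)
The statement just given is the Kubo--Ando correspondence (Theorem~\ref{TheoremKuboAndo}), extended from the complex case to $\mathbb{D} \in \left\{\mathbb{R}\,, \mathbb{C}\,, \mathbb{H}\right\}$. My plan is to reduce the general pair $(\A,\B)$ to the pair $(\Id_{n}, \X)$ via the transformer property, and to extract $f$ from the restriction of $\sigma$ to scalar matrices.

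\emph{Reduction to the identity.} Given $\sigma$, put $g = \A^{-\frac{1}{2}}$, which is Hermitian and so satisfies $g^{*} = g$. Property (2) gives
\begin{equation*}
\A^{-\frac{1}{2}}(\A\sigma\B)\A^{-\frac{1}{2}} = \Id_{n} \,\sigma\, \X, \qquad \X = \A^{-\frac{1}{2}}\B\A^{-\frac{1}{2}}.
\end{equation*}
It therefore suffices to show $\Id_{n}\sigma\X = f(\X)$ for a suitable function $f$.

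\emph{Definition of $f$.} For $t>0$, apply (2) with $g = u \in \U_{\mathbb{D}}$. This shows that $\Id_{n}\sigma t\Id_{n}$ commutes with every unitary, hence is a real scalar matrix, which I denote $f(t)\Id_{n}$. For $\mathbb{D}=\mathbb{H}$, note that the center of $\Mat_{n}(\mathbb{H})$ is $\mathbb{R}\Id_{n}$, and the scalar is positive because the mean lands in $\mathscr{P}_{n}(\mathbb{D})$. Condition (3) gives $f(1)=1$, and continuity of $\sigma$ gives continuity of $f$.

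\emph{Diagonal and general case.} Write $\X = \U\Lambda\U^{*}$ as in Remark~\ref{RemarkPreliminaries}. Then (2) gives $\Id_{n}\sigma\X = \U(\Id_{n}\sigma\Lambda)\U^{*}$, so we reduce to diagonal $\Lambda$. Conjugating by the diagonal sign matrices $\diag(\pm1)$, which lie in $\U_{\mathbb{D}}$, shows that $\Id_{n}\sigma\Lambda$ commutes with them and is therefore diagonal. To identify its $i$-th diagonal entry with $f(\lambda_{i})$, I would use (2) with non-invertible idempotent-like $g$. Since (2) is stated only for $g \in \GL_{n}(\mathbb{D})$, I instead use $g_{\epsilon} = \diag(\epsilon,\dots,1,\dots,\epsilon)$. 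This yields that $\Id_{n}\sigma\Lambda$ agrees with the mean of two matrices whose off-$i$ entries are scaled by $\epsilon^{2}$. Combining this with monotonicity (1), sandwiching $\Lambda$ between $\lambda_{i}$ in slot $i$ and either $\max_{j}\lambda_{j}$ or $\min_{j}\lambda_{j}$ elsewhere, isolates the $i$-th entry as $f(\lambda_{i})$. So $\Id_{n}\sigma\Lambda = f(\Lambda)$, and hence $\Id_{n}\sigma\X = f(\X)$. The missing ingredient is homogeneity: (2) with $g = c\Id_{n}$ gives $(c^{2}\A)\sigma(c^{2}\B) = c^{2}(\A\sigma\B)$. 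The $\epsilon$-argument, however, needs more than this: a block-diagonal ``locality'' statement saying that for $\Lambda = \lambda_{i} \oplus \Lambda'$ the $i$-th entry depends only on $\lambda_{i}$. This is the step I expect to be the main obstacle. I would attempt it via monotonicity: the $i$-th entry is squeezed between its values at $\lambda_{i}\oplus m\Id$ and $\lambda_{i}\oplus M\Id$. Scaling the complementary block by $g_{\epsilon}$ and using continuity as $\epsilon\to0$ should force both bounds to equal $f(\lambda_{i})$.

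\emph{Operator monotonicity, uniqueness, and the converse.} Monotonicity of $f(\X) = \Id_{n}\sigma\X$ in $\X$ follows from (1), and this holds for every $n$. Note that $\mathscr{P}_{n}(\mathbb{C})$ embeds into real matrices, and complex matrices embed into quaternionic ones, so operator monotonicity over $\mathbb{D}$ for all $n$ coincides with the classical notion. Uniqueness is immediate, since $f(t)$ is recovered from scalar inputs. For the bijection, I would take an operator monotone $f$ with $f(1)=1$ and define $\sigma$ by \eqref{EquationKuboAndoSigma}. Then (3) and continuity are clear. For (2), I would invoke the classical Kubo--Ando proof, which rests on the integral representation of $f$ and is valid verbatim over $\mathbb{R}$ and $\mathbb{H}$ via the real and complex embeddings. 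Property (1) follows likewise from the parallel-sum representation $\A\sigma\B = \alpha\A + \beta\B + \int \frac{1+t}{t}\left((t\A):\B\right)d\mu(t)$.
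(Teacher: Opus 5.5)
The paper gives no proof of this theorem; it quotes it from Kubo and Ando. Taken on its own terms, your argument has a genuine gap at the sentence ``this holds for every $n$''.

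A mean in the paper's sense lives on a single cone $\mathscr{P}_{n}(\mathbb{D})$. So axiom (1) only shows that $f$ is matrix monotone of order $n$. Your converse, however, relies on L\"owner's integral representation, which needs monotonicity of every order. The two halves therefore do not close up into a bijection, and this cannot be repaired inside the fixed-$n$ framework. Already for $n=1$, $a\sigma b=\min(a,b)$ is continuous and satisfies (1)--(3), yet its representing function $f(t)=\min(1,t)$ is not operator monotone. Reading ``operator monotone'' as ``monotone of order $n$'' fails too: $f(t)=t^{2}$ is monotone of order $1$, but $a\,f(b/a)=b^{2}/a$ is decreasing in $a$.

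What you need is Kubo and Ando's actual setting. That means either a mean on the positive invertible operators of an infinite-dimensional Hilbert space, or a family of means on $\mathscr{P}_{m}(\mathbb{D})$ for all $m$, compatible with direct sums. Then $\Id\sigma\X=f(\X)$ holds in every size with the same $f$, and your monotonicity argument gives operator monotonicity. Only then does your remark comparing $\mathbb{R}$, $\mathbb{C}$, $\mathbb{H}$ do any work.

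The locality step is also not established as written. The $g_{\epsilon}$-congruence by itself is tautological, since it rescales both arguments by the same diagonal and leaves the entrywise ratio unchanged. ``Continuity as $\epsilon\to0$'' is unavailable, because $g_{\epsilon}^{2}$ tends to a singular matrix outside the domain of $\sigma$. The missing ingredient is monotonicity in the \emph{first} argument. Take $\Delta_{\epsilon}=\diag(1,\ldots,\epsilon,\ldots,1)$ with $0<\epsilon<1$ in slot $j\neq i$. Then (2) and (1) give
\begin{equation*}
\Delta_{\epsilon}(\Id_{n}\sigma\Lambda)\Delta_{\epsilon}=\Delta_{\epsilon}^{2}\sigma\Delta_{\epsilon}^{2}\Lambda\preceq\Id_{n}\sigma\Delta_{\epsilon}^{2}\Lambda\,.
\end{equation*}
Comparing $(i,i)$ entries yields $(\Id_{n}\sigma\Lambda)_{ii}\leq(\Id_{n}\sigma\Delta_{\epsilon}^{2}\Lambda)_{ii}$. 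On the other hand, $\Delta_{\epsilon}^{2}\Lambda\preceq\Lambda$ and monotonicity in the second argument give the reverse inequality. Hence the $i$-th entry does not depend on $\lambda_{j}$ for any $j\neq i$. It therefore equals the $i$-th entry of $\Id_{n}\sigma\lambda_{i}\Id_{n}=f(\lambda_{i})\Id_{n}$.

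A smaller point: property (2) for $\A^{\frac{1}{2}}f(\A^{-\frac{1}{2}}\B\A^{-\frac{1}{2}})\A^{\frac{1}{2}}$ needs no integral representation. With $\A_{g}=g\A g^{*}$ and $\B_{g}=g\B g^{*}$, the matrix $W=\A_{g}^{-\frac{1}{2}}g\A^{\frac{1}{2}}$ is unitary and $\A_{g}^{-\frac{1}{2}}\B_{g}\A_{g}^{-\frac{1}{2}}=W\X W^{*}$. So the identity holds for any continuous $f$. The integral representation is only needed for monotonicity in the first argument.
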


\noindent The main goal of this paper is to obtain an explicit decomposition of Kubo-Ando means in terms of the spectrum of $\A^{-1}\B$. More precisely, we show that $\A \sigma \B$ can be written as a finite linear combination of matrices of the form $\A\left(\A^{-1}\B\right)^k$, whose coefficients depend only on the representing function $f$ of $\sigma$ and the eigenvalues of $\A^{-1}\B$\,.

\section{On the linearization of Kubo-Ando means}

We start with the following proposition, extending Theorem 5.3 of \cite{FRANCOMERINO2}\,.

\begin{proposition}

Let $\sigma$ be a Kubo-Ando mean with representing function $f$, and let $\A\,, \B \in \mathscr{P}_{n}(\mathbb{D})$ such that $\left|\Spec(\A^{-1}\B)\right| = 2$. Then $\A \sigma \B$ is linearizable. More precisely, if $\Spec(\A^{-1}\B) = \left\{\lambda_{1}\,, \lambda_{2}\right\}$, we have
\begin{equation*}
\A \sigma \B = \frac{\lambda_{1}f(\lambda_{2}) - \lambda_{2}f(\lambda_{1})}{\lambda_{1} - \lambda_{2}}\A + \frac{f(\lambda_{1}) - f(\lambda_{2})}{\lambda_{1} - \lambda_{2}}\B\,.
\end{equation*}

\label{PropositionSpectrumTwo}

\end{proposition}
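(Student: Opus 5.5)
The plan is to reduce everything to the Hermitian matrix $\X = \A^{-\frac{1}{2}}\B\A^{-\frac{1}{2}}$ and use Theorem \ref{TheoremKuboAndo}, which gives $\A \sigma \B = \A^{\frac{1}{2}} f(\X)\A^{\frac{1}{2}}$. By Remark \ref{RemarkPreliminaries}(2), $\Spec(\X) = \Spec(\A^{-1}\B) = \left\{\lambda_{1}\,, \lambda_{2}\right\}$ with $\lambda_{1} \neq \lambda_{2}$ both in $\left(0\,, \infty\right)$. The key observation is that $f(\X)$ only depends on the values of $f$ on these two eigenvalues. Hence we may replace $f$ by the unique affine function $\p(t) = \alpha + \beta t$ that interpolates $f$ at $\lambda_{1}$ and $\lambda_{2}$.

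First, I would compute the interpolating line explicitly. Solving $\alpha + \beta\lambda_{i} = f(\lambda_{i})$ for $i = 1, 2$ gives
\begin{equation*}
\beta = \frac{f(\lambda_{1}) - f(\lambda_{2})}{\lambda_{1} - \lambda_{2}}\,, \qquad \alpha = \frac{\lambda_{1}f(\lambda_{2}) - \lambda_{2}f(\lambda_{1})}{\lambda_{1} - \lambda_{2}}\,.
\end{equation*}
Next, I would use the spectral decomposition $\X = \U\Lambda\U^{*}$ from Remark \ref{RemarkPreliminaries}(3). Each diagonal entry of $\Lambda$ lies in $\left\{\lambda_{1}\,, \lambda_{2}\right\}$, so $f(\Lambda) = \p(\Lambda) = \alpha\Id_{n} + \beta\Lambda$ entrywise. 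Conjugating by $\U$ and using Remark \ref{RemarkPreliminaries}(4) then gives $f(\X) = \alpha\Id_{n} + \beta\X$.

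Finally, I would substitute this into the Kubo-Ando formula:
\begin{equation*}
\A \sigma \B = \A^{\frac{1}{2}}\left(\alpha\Id_{n} + \beta\A^{-\frac{1}{2}}\B\A^{-\frac{1}{2}}\right)\A^{\frac{1}{2}} = \alpha\A + \beta\B\,,
\end{equation*}
which is exactly the claimed expression.

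There is no real obstacle here. The only point needing care is the quaternionic case, where one should check two things. The functional calculus $f(\X) = \U f(\Lambda)\U^{*}$ must be well defined and compatible with real-scalar affine expressions, which holds because $\alpha$ and $\beta$ are real and therefore central in $\Mat_{n}(\mathbb{H})$. The similarity in Remark \ref{RemarkPreliminaries}(2) must also preserve the right-eigenvalue spectrum, and it does since the eigenvalues involved are real. Both are covered by the preliminaries.
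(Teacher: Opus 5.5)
Your proposal is correct and follows the paper's proof essentially step for step. The paper likewise interpolates $f$ at $\lambda_{1}, \lambda_{2}$ by the affine polynomial $a + bt$, uses the spectral decomposition of $\X = \A^{-\frac{1}{2}}\B\A^{-\frac{1}{2}}$ to conclude $f(\X) = a\Id_{n} + b\X$, and then conjugates by $\A^{\frac{1}{2}}$ to obtain $a\A + b\B$.
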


\begin{proof}

Let $\X = \A^{-\frac{1}{2}}\B\A^{-\frac{1}{2}}$. We have $\Spec(\A^{-1}\B) = \left\{\lambda_{1}\,, \lambda_{2}\right\}$. Therefore, there exists a unique degree one polynomial $r(t) = a + bt$ such that $r(\lambda_{i}) = f(\lambda_{i})$. More precisely, $a$ and $b$ are the unique solution of the system
\begin{equation*}
\begin{cases} a + b\lambda_{1} = f(\lambda_{1}) \\ a + b\lambda_{2} = f(\lambda_{2}) \end{cases}
\end{equation*}
i.e.
\begin{equation*}
a = \frac{\lambda_{1}f(\lambda_{2}) - \lambda_{2}f(\lambda_{1})}{\lambda_{1} - \lambda_{2}}\,, \qquad b = \frac{f(\lambda_{1}) - f(\lambda_{2})}{\lambda_{1} - \lambda_{2}}\,.
\end{equation*}
Since $\X \in \mathscr{P}_{n}(\mathbb{D})$ and $\Spec(\X) = \left\{\lambda_{1}\,, \lambda_{2}\right\}$, there exists $1 \leq m < n$ and $\U \in \U_{\mathbb{D}}$ such that $\X = \U\Lambda\U^{*}$, with $\Lambda = \diag\left(\lambda_{1}\Id_{m}\,, \lambda_{2}\Id_{n-m}\right)$. In particular, by functional calculus, we get
\begin{eqnarray*}
r(\X) & = & \U r(\Lambda)\U^{*} = \U\diag\left(r(\lambda_{1})\Id_{m}\,, r(\lambda_{2})\Id_{n-m}\right)\U^{*} \\
& = & \U\diag\left(f(\lambda_{1})\Id_{m}\,, f(\lambda_{2})\Id_{n-m}\right)\U^{*} =  \U f(\Lambda)\U^{*} = f(\X)\,.
\end{eqnarray*}
Finally, using that $\A^{\frac{1}{2}}\X\A^{\frac{1}{2}} = \B$, we get
\begin{equation*}
\A \sigma \B = \A^{\frac{1}{2}}f(\X)\A^{\frac{1}{2}} = \A^{\frac{1}{2}}\left(a\Id_{n} + b\X\right)\A^{\frac{1}{2}} = a\A + b\B\,,
\end{equation*}
and the proposition follows\,.

\end{proof}

\begin{remark}

\begin{enumerate}
\item In Proposition \ref{PropositionSpectrumTwo}, the condition $\left|\Spec(\A^{-1}\B)\right| = 2$ could be replaced by $\left|\Spec(\A^{-1}\B)\right| \leq  2$. Indeed, if $\left|\Spec(\A^{-1}\B)\right| = 1$, then $\B = \lambda\A$, and therefore
\begin{equation*}
A \sigma \B = f(\lambda)\A\,.
\end{equation*}
\item It is easy to see that the condition $\left|\Spec(\A^{-1}\B)\right| \leq 2$ is sufficient but not necessary. Indeed, let $\nabla$ be the arithmetic mean on $\mathscr{P}_{n}(\mathbb{D})$. Then for all pairs $\left(\A\,, \B\right)$, $\A \nabla \B$ is linearizable. However, we get the following theorem\,.
\end{enumerate}

\end{remark}

\begin{theo}

Let $\left(\A\,, \B\right) \in \mathscr{P}_{n}(\mathbb{D})^{2}$. Let $\sigma$ be a Kubo-Ando mean whose representing function $f$ is not affine. Then $\A \sigma \B$ is linearizable if and only if $\left|\Spec(\A^{-1}\B)\right| \leq 2$\,.

\label{TheoremSectionTwo}

\end{theo}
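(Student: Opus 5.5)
The sufficiency direction is already contained in Proposition \ref{PropositionSpectrumTwo} together with the first item of the subsequent remark: if $\left|\Spec(\A^{-1}\B)\right| \leq 2$, then $\A \sigma \B = a\A + b\B$ with explicit scalars $a, b$. So the plan is to prove the converse. Here "linearizable" means that there exist scalars $a, b \in \mathbb{R}$ with $\A \sigma \B = a\A + b\B$.

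Assume $\A \sigma \B = a\A + b\B$. Set $\X = \A^{-\frac{1}{2}}\B\A^{-\frac{1}{2}} \in \mathscr{P}_{n}(\mathbb{D})$. Conjugating by $\A^{-\frac{1}{2}}$, which is legitimate by the invariance property (2) of a Kubo-Ando mean or directly by \eqref{EquationKuboAndoSigma}, the identity becomes
\begin{equation*}
f(\X) = a\Id_{n} + b\X\,.
\end{equation*}
Next we diagonalize $\X = \U\Lambda\U^{*}$ as in Remark \ref{RemarkPreliminaries}, with $\U \in \U_{\mathbb{D}}$. The identity then reads $f(\Lambda) = a\Id_{n} + b\Lambda$, that is, $f(\lambda) = a + b\lambda$ for every $\lambda \in \Spec(\X) = \Spec(\A^{-1}\B)$. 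In the quaternionic case we use the remark that Hermitian matrices have real spectrum and a unitary diagonalization, so the same reduction applies verbatim.

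We now argue by contradiction. Suppose $\Spec(\A^{-1}\B)$ contains three distinct points $\lambda_{1} < \lambda_{2} < \lambda_{3}$ in $(0, \infty)$. Then $f$ agrees with the affine function $t \mapsto a + bt$ at these three points. The key analytic input is that an operator monotone function $f : (0, \infty) \to (0, \infty)$ is operator concave; in particular it is concave, and in fact it is given by L\"owner's integral representation
\begin{equation*}
f(t) = \alpha + \beta t + \int_{(0,\infty)} \frac{t(1+s)}{t+s}\, d\mu(s)\,,
\end{equation*}
with $\beta \geq 0$ and $\mu$ a positive measure. Each kernel $t \mapsto \frac{t}{t+s}$ is strictly concave on $(0, \infty)$. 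Hence if $\mu \neq 0$, then $f$ is strictly concave, and a strictly concave function cannot coincide with an affine function at three distinct points, since the middle point would lie strictly above the chord. If $\mu = 0$, then $f$ is affine, which is excluded by hypothesis. This yields the contradiction, so $\left|\Spec(\A^{-1}\B)\right| \leq 2$.

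The main obstacle is this strict-concavity step. Plain concavity of $f$ only forces $f$ to be affine on $[\lambda_{1}, \lambda_{3}]$, not on all of $(0, \infty)$. To close the gap I would first try to invoke L\"owner's representation as above, since the dichotomy ``affine or strictly concave'' is then immediate. As an alternative, one could use analyticity: an operator monotone function is real analytic on $(0, \infty)$, so if it is affine on an interval it is affine everywhere, which again contradicts the hypothesis.
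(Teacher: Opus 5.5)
Your proposal is correct and follows essentially the same route as the paper: you reduce linearizability to $f(\X)=a\Id_{n}+b\X$, read this off on the eigenvalues, use that a non-affine representing function meets a line in at most two points, and take the converse from Proposition \ref{PropositionSpectrumTwo}. The one difference is in your favor: the paper simply asserts that a non-affine $f$ is strictly concave, whereas your L\"owner-representation argument justifies that step (each kernel $t\mapsto \frac{t(1+s)}{t+s}$ is strictly concave, so $f$ is strictly concave if $\mu\neq 0$ and affine if $\mu=0$), and your real-analyticity alternative would also work.
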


\begin{proof}

Assume that $\A \sigma \B$ is linearizable. In particular, it follows from Equation \ref{EquationKuboAndoSigma} that there exists $a\,, b \in \mathbb{R}$ such that
\begin{equation*}
\A^{\frac{1}{2}}f(\X)\A^{\frac{1}{2}} = a\A + b\B\,,
\end{equation*}
with $\X = \A^{-\frac{1}{2}}\B\A^{-\frac{1}{2}}$. By multiplying the previous equation by $\A^{-\frac{1}{2}}$ on both sides, we get
\begin{equation}
f(\X) = a\Id_{n} + b\X\,.
\label{SquareRootX}
\end{equation}
Let $\lambda \in \Spec(\X)$. Then it follows from Equation \eqref{SquareRootX} that
\begin{equation*}
f(\lambda) = a + b\lambda\,.
\end{equation*}
Using that $f$ is not affine, it follows that $f$ is strictly concave. Therefore, the equation $f(\lambda) = a + b\lambda$ has at most two solutions, i.e. $\left|\Spec(\X)\right| = \left|\Spec(\A^{-1}\B)\right| \leq 2$\,.

\noindent The converse follows from Proposition \ref{PropositionSpectrumTwo}\,.
\end{proof}

\section{Decomposition of Kubo-Ando means}

\label{SectionDecompositionKAMeans}

We first give an analogue of Proposition \ref{PropositionSpectrumTwo}. Let $\sigma$ be a Kubo-Ando mean with representing function $f$. 

\begin{proposition}

Let $\X \in \mathscr{P}_{n}(\mathbb{D})$ such that $\left|\Spec(\X)\right| = r$. There exists a unique polynomial $\P$ of degree $r-1$ such that $f(\X) = \P(\X)$\,.

\label{PropositionExsitenceP}

\end{proposition}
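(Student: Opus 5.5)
We argue exactly as in the proof of Proposition \ref{PropositionSpectrumTwo}, replacing linear interpolation by Lagrange interpolation. Write $\Spec(\X) = \left\{\lambda_{1}\,, \ldots\,, \lambda_{r}\right\}$ with the $\lambda_{i} \in (0\,, \infty)$ pairwise distinct (Remark \ref{RemarkPreliminaries}). Consider the Lagrange polynomial
\begin{equation*}
\P(t) = \sum\limits_{i=1}^{r} f(\lambda_{i}) \prod\limits_{j \neq i}\frac{t - \lambda_{j}}{\lambda_{i} - \lambda_{j}}\,,
\end{equation*}
which has degree at most $r-1$ and satisfies $\P(\lambda_{i}) = f(\lambda_{i})$ for all $i$. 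Equivalently, the coefficients of $\P$ are the unique solution of the Vandermonde system $\sum_{k} c_{k}\lambda_{i}^{k} = f(\lambda_{i})$. This system is uniquely solvable because the $\lambda_{i}$ are distinct.

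Next, write $\X = \U\Lambda\U^{*}$ with $\U \in \U_{\mathbb{D}}$ and $\Lambda$ diagonal with entries in $\Spec(\X)$. Functional calculus applied to the polynomial $\P$ gives $\P(\X) = \U\P(\Lambda)\U^{*}$. Here one checks that the polynomial evaluated on matrices agrees with the spectral definition, which holds because $\U^{*}\U = \Id_{n}$, so $(\U\Lambda\U^{*})^{k} = \U\Lambda^{k}\U^{*}$. In the quaternionic case this still works since the $\lambda_{i}$ and the coefficients are real and hence central. Since $\P(\lambda_{i}) = f(\lambda_{i})$ on each diagonal entry, we get $\P(\Lambda) = f(\Lambda)$, and therefore $\P(\X) = f(\X)$.

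For uniqueness, suppose $\Q$ is another polynomial of degree at most $r-1$ with $\Q(\X) = f(\X)$. Then $(\P - \Q)(\X) = 0$. Conjugating by $\U$ gives $(\P-\Q)(\Lambda) = 0$, so $\P - \Q$ vanishes at the $r$ distinct points $\lambda_{i}$. A polynomial of degree at most $r-1$ with $r$ roots is zero, so $\P = \Q$.

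The one delicate point, which is the main obstacle, is the phrase ``degree $r-1$''. Interpolation only gives degree \emph{at most} $r-1$, and the leading coefficient is the divided difference $f[\lambda_{1}\,, \ldots\,, \lambda_{r}] = \sum_{i} f(\lambda_{i})/\prod_{j\neq i}(\lambda_{i}-\lambda_{j})$. I would interpret the statement as uniqueness among polynomials of degree at most $r-1$. For a non-affine $f$, strict concavity gives $f[\lambda_{1}\,, \lambda_{2}\,, \lambda_{3}] < 0$, so the degree is exactly $r-1$ when $r \leq 3$. For larger $r$ I would try to use the integral (L\"owner) representation of operator monotone functions, under which the divided differences have constant sign, to show that the leading coefficient is nonzero. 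If that attempt fails, I would state the result with ``degree at most $r-1$'', which is all the later decomposition needs.
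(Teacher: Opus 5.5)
Your proof is correct and follows essentially the paper's route: interpolate $f$ at the $r$ distinct eigenvalues (the paper solves the Vandermonde system and records the Lagrange form right afterwards), then pass from $\P(\lambda_{i})=f(\lambda_{i})$ to $\P(\X)=f(\X)$ through $\X=\U\Lambda\U^{*}$. You also supply the uniqueness argument the paper leaves implicit, and you are right that in general only ``degree at most $r-1$'' holds: for affine $f$ such as $f(t)=\frac{1+t}{2}$ with $r\geq 3$ the degree drops, while for non-affine $f$ your L\"owner idea does give degree exactly $r-1$, since for $r\geq 3$ each kernel $t\mapsto\frac{t}{s+t}$ ($s>0$) has top divided difference $(-1)^{r}s/\prod_{i}(s+\lambda_{i})$, and these same-sign contributions cannot cancel.
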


\begin{proof}

Let $\Spec(\X) = \left\{\lambda_{1}\,, \ldots\,, \lambda_{r}\right\}$. The matrix $\X$ can be written as $\X = \U\Lambda\U^{*}$, where $\U \in \U_{\mathbb{D}}$ and $\Lambda = \diag\left(\lambda_{i}\Id_{n_{i}}\right)$, where $n_{i} \in \mathbb{N}^{*}$ is the geometric multiplicity of $\lambda_{i}$. Let $\V(\lambda) := \V(\lambda_{1}\,, \ldots\,, \lambda_{r}) \in \Mat_{r}(\mathbb{R})$ be the Vandermonde matrix, i.e.
\begin{equation*}
\V(\lambda)_{i,j} = \lambda^{j-1}_{i}\,, \qquad \left(1 \leq i\,, j \leq r\right)\,.
\end{equation*}
It is well-known that $\det(\V_{\lambda}) = \prod\limits_{1 \leq i < j \leq r} \left(\lambda_{i} - \lambda_{j}\right)$, i.e. $\det(\V(\lambda)) \neq 0$. Let $\F(\lambda)$ be the vector of $\mathbb{R}^{r}$ given by $\F(\lambda) = \left(f(\lambda_{1})\,, \ldots\,, f(\lambda_{r})\right)^{t}$. We denote by $\C = \left(c_{0}\,, c_{1}\,, \ldots\,, c_{r-1}\right)$ the unique solution of $\V(\lambda)\C = \F(\lambda)$. In particular, we have
\begin{equation*}
\sum\limits_{k = 0}^{r-1}  c_{k}\lambda^{k}_{i} = f(\lambda_{i})\,, \qquad \left(1 \leq i \leq r\right)\,,
\end{equation*}
and let $\P$ be the polynomial given by $\P(t) = \sum\limits_{k = 0}^{r-1} c_{k}t^{k}$. Using that $\P(\lambda_{i}) = f(\lambda_{i})$ for all $i$, it follows from functional calculus that
\begin{eqnarray*}
\P(\X) & = & \U\P(\Lambda)\U^{*} = \U\diag\left(\P(\lambda_{1})\Id_{n_{1}}\,, \ldots\,, \P(\lambda_{r})\Id_{n_{r}}\right)\U^{*} \\
& = & \U\diag\left(f(\lambda_{1})\Id_{n_{1}}\,, \ldots\,, f(\lambda_{r})\Id_{n_{r}}\right)\U^{*} = f(\X)\,.
\end{eqnarray*}

\end{proof}

\begin{remark}

The coefficients $c_{0}\,, c_{1}\,, \ldots\,, c_{r-1}$ obtained in the previous proof can be determined explicitly. For all $1 \leq i \leq r$, we denote by $\V_{i}(\F(\lambda))$ the matrix obtained by replacing the $i$-th column of $\V(\lambda)$ by $\F(\lambda)$. Then for all $0 \leq i \leq r-1$, we have
\begin{equation*}
c_{i} = \frac{\det(\V_{i+1}(\F(\lambda)))}{\det(\V(\lambda))}\,.
\end{equation*}

\end{remark}

\noindent We can even use Lagrange interpolation. Indeed the unique polynomial $\P$ of degree $r-1$ satisfying $\P(\lambda_{i}) = f(\lambda_{i})\,, 1 \leq i \leq r$ is given by
\begin{equation}
\P(t) = \sum\limits_{i = 1}^{r} f(\lambda_{i}) \prod\limits_{j \neq i} \frac{t - \lambda_{j}}{\lambda_{i} - \lambda_{j}}\,.
\label{PolynomialPT}
\end{equation}

\begin{notation}

\begin{itemize}
\item Let $\lambda = \left(\lambda_{1}\,, \ldots\,, \lambda_{r}\right)$. For all $1 \leq i \leq r$, we denote by $\widehat{\lambda_{i}}$ the vector of $\mathbb{R}^{r-1}$ obtained by removing $\lambda_{i}$ from $\lambda$\,.
\item For all $0 \leq m \leq r-1$, we denote by $e_{m}$ the elementary symmetric polynomial, i.e.
\begin{equation*}
e_{m}\left(\gamma_{1}\,, \ldots\,, \gamma_{r-1}\right) = \sum\limits_{1 \leq i_{1} < i_{2} < \ldots < i_{m} \leq r-1} \gamma_{i_{1}}\gamma_{i_{2}} \ldots \gamma_{i_{m}}\,,
\end{equation*}
with $e_{0} = 1$\,.
\end{itemize}

\end{notation}

\noindent We now simplify Equation \eqref{PolynomialPT}. We have
\begin{equation*}
\prod\limits_{j \neq i}(t - \lambda_{j}) = \sum\limits_{k = 0}^{r-1} (-1)^{r-1-k}e_{r-1-k}(\widehat{\lambda_{i}})t^{k}\,,
\end{equation*}
i.e.
\begin{eqnarray*}
\P(t) & = & \sum\limits_{i = 1}^{r} f(\lambda_{i}) \prod\limits_{j \neq i} \frac{t - \lambda_{j}}{\lambda_{i} - \lambda_{j}} = \sum\limits_{i = 1}^{r} f(\lambda_{i})\left(\sum\limits_{k = 0}^{r-1} (-1)^{r-1-k}\frac{e_{r-1-k}(\widehat{\lambda_{i}})}{\prod\limits_{j \neq i}(\lambda_{i} - \lambda_{j})}t^{k}\right) \\ 
& = & \sum\limits_{k = 0}^{r-1} (-1)^{r-1-k}\left(\sum\limits_{i = 1}^{r} f(\lambda_{i})\frac{e_{r-1-k}(\widehat{\lambda_{i}})}{\prod\limits_{j \neq i}(\lambda_{i} - \lambda_{j})}\right)t^{k}
\end{eqnarray*}
Finally, we get 
\begin{equation}
c_{k} = (-1)^{r-1-k}\sum\limits_{i = 1}^{r} f(\lambda_{i})\frac{e_{r-1-k}(\widehat{\lambda_{i}})}{\prod\limits_{j \neq i}(\lambda_{i} - \lambda_{j})}\,, \qquad \left(0 \leq k \leq r-1\right)\,.
\label{CoefficientsCK}
\end{equation}

\begin{theo}

Let $\A\,, \B \in \mathscr{P}_{n}(\mathbb{D})$ such that $\left|\Spec(\A^{-1}\B)\right| = r$. Then
\begin{equation}
\A \sigma \B = c_{0}\A + \sum\limits_{k = 1}^{r-1} c_{k}\left(\B\A^{-1}\right)^{k-1}\B\,.
\label{DecompositionASigmaB}
\end{equation}
where the coefficients $c_{k}$ depend only on $f$ and the eigenvalues of $\A^{-1}\B$, and are given in Equation \eqref{CoefficientsCK}\,.

\label{TheoremGeneralDecomposition}

\end{theo}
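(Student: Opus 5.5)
We set $\X = \A^{-\frac{1}{2}}\B\A^{-\frac{1}{2}}$ and combine Theorem \ref{TheoremKuboAndo} with Proposition \ref{PropositionExsitenceP}. By Remark \ref{RemarkPreliminaries}, $\Spec(\X) = \Spec(\A^{-1}\B) = \left\{\lambda_{1}\,, \ldots\,, \lambda_{r}\right\}$. Proposition \ref{PropositionExsitenceP} therefore gives the polynomial $\P(t) = \sum_{k=0}^{r-1} c_{k}t^{k}$ with $f(\X) = \P(\X)$. The Lagrange computation preceding the theorem shows that its coefficients are exactly those of Equation \eqref{CoefficientsCK}. They depend only on $f$ and on the $\lambda_{i}$, which settles the dependence claim in the statement. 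Theorem \ref{TheoremKuboAndo} then yields
\begin{equation*}
\A \sigma \B = \A^{\frac{1}{2}}\P(\X)\A^{\frac{1}{2}} = \sum\limits_{k = 0}^{r-1} c_{k}\A^{\frac{1}{2}}\X^{k}\A^{\frac{1}{2}}\,.
\end{equation*}

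It then remains to identify each term $\A^{\frac{1}{2}}\X^{k}\A^{\frac{1}{2}}$. For $k = 0$ it equals $\A$. For $k \geq 1$ we use the telescoping identity
\begin{equation*}
\X^{k} = \left(\A^{-\frac{1}{2}}\B\A^{-\frac{1}{2}}\right)^{k} = \A^{-\frac{1}{2}}\B\left(\A^{-1}\B\right)^{k-1}\A^{-\frac{1}{2}}\,,
\end{equation*}
which follows by induction on $k$, since $\A^{-\frac{1}{2}}\A^{-\frac{1}{2}} = \A^{-1}$ between consecutive factors. Consequently $\A^{\frac{1}{2}}\X^{k}\A^{\frac{1}{2}} = \B(\A^{-1}\B)^{k-1}$. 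Regrouping the product $\B\A^{-1}\B\A^{-1}\cdots\B$ gives $\B(\A^{-1}\B)^{k-1} = (\B\A^{-1})^{k-1}\B$, and it also gives $\A(\A^{-1}\B)^{k}$, which is the form quoted in the introduction. Substituting these expressions into the sum gives Equation \eqref{DecompositionASigmaB}.

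No step here is genuinely hard. The two points needing care are, first, that $f$ and $\P$ agree only on $\Spec(\X)$ and not as functions; this is exactly what Proposition \ref{PropositionExsitenceP} handles through the spectral decomposition. Second, in the quaternionic case the matrices do not commute with scalars in $\mathbb{H}$, but the $c_{k}$ are real, so multiplying by them causes no difficulty. The only nontrivial bookkeeping is the telescoping identity for $\X^{k}$, which is a routine induction.
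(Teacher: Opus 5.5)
Your proposal is correct and follows essentially the same route as the paper: set $\X = \A^{-\frac{1}{2}}\B\A^{-\frac{1}{2}}$, replace $f(\X)$ by the interpolating polynomial from Proposition \ref{PropositionExsitenceP}, and identify each $\A^{\frac{1}{2}}\X^{k}\A^{\frac{1}{2}}$ with $(\B\A^{-1})^{k-1}\B$. Your explicit induction for $\X^{k}$ simply spells out the telescoping product that the paper writes with dots.
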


\begin{proof}

Let $\X = \A^{-\frac{1}{2}}\B\A^{-\frac{1}{2}}$. Using Remark \ref{RemarkPreliminaries}, we have $\Spec(\A^{-1}\B) = \Spec(\X)$. As explained in Proposition \ref{PropositionExsitenceP}, we have $f(\X) = \P(\X)$, with $\P(\X) = \sum\limits_{k = 0}^{r-1}c_{k}\X^{k}$. Then it follows from Proposition \ref{PropositionExsitenceP} that
\begin{equation*}
\A \sigma \B = \A^{\frac{1}{2}}f(\X)\A^{\frac{1}{2}} = \sum\limits_{k = 0}^{r-1}c_{k}\A^{\frac{1}{2}}\X^{k}\A^{\frac{1}{2}}\,.
\end{equation*}
Using that for all $1 \leq k \leq r-1$, we get
\begin{equation*}
\A^{\frac{1}{2}}\X^{k}\A^{\frac{1}{2}} = \A^{\frac{1}{2}}\left(\A^{-\frac{1}{2}}\B\A^{-\frac{1}{2}} \cdot \A^{-\frac{1}{2}}\B\A^{-\frac{1}{2}} \cdot \ldots \cdot \A^{-\frac{1}{2}}\B\A^{-\frac{1}{2}}\right)\A^{\frac{1}{2}} = (\B\A^{-1})^{k-1}\B\,,
\end{equation*}
therefore
\begin{equation*}
\A \sigma \B = \sum\limits_{k = 0}^{r-1}c_{k}\A^{\frac{1}{2}}\X^{k}\A^{\frac{1}{2}} = c_{0}\A + \sum\limits_{k = 1}^{r-1}c_{k}(\B\A^{-1})^{k-1}\B\,.
\end{equation*}

\end{proof}

\begin{remark}

\begin{enumerate}
\item The coefficients $c_{k}$ in the decomposition of $\A \sigma \B$ given in Equation \eqref{DecompositionASigmaB} only depends on $f$ and $\Spec(\A^{-1}\B)$. In the next section, we will show that in the case $n = 3$, the roots $\lambda_{1}\,, \lambda_{2}\,, \lambda_{3}$ only depends on $\det(\X)\,, \tr(\X)$, and $\tr(\X^{2})$\,.
\item The equality obtained in Equation \eqref{DecompositionASigmaB} can be rewritten as
\begin{equation}
\A \sigma \B = \sum\limits_{i = 0}^{r-1} c_{i}\A\left(\A^{-1}\B\right)^{i}\,.
\label{EquationASigmaBSecondVersion}
\end{equation}
\item Assume that $\left|\Spec(\X)\right| = r$. Then it follows that the set of matrices  
\begin{equation*}
\left\{\Id_{n}\,, \X\,, \ldots\,, \X^{r-1}\right\}
\end{equation*}
is linearly independent. Therefore, Theorem \ref{TheoremSectionTwo} can be obtained directly using Equation \eqref{EquationASigmaBSecondVersion}\,.
\end{enumerate}

\end{remark}

\section{Explicit computations in $\mathscr{P}_{3}(\mathbb{D})$}

Let $\sigma$ be a Kubo-Ando mean on $\mathscr{P}_{3}(\mathbb{D})$. Let $\A\,, \B$ be two matrices in $\mathscr{P}_{3}(\mathbb{D})$, and let $\X = \A^{-\frac{1}{2}}\B\A^{-\frac{1}{2}}$. In this section, we will give a description of the coefficients appearing in the decomposition of $\A \sigma \B$ only using $\det(\X)\,, \tr(\X)\,,$ and $\tr(\X^{2})$. We will distinguish three different cases: $\left|\Spec(\X)\right| = 1\,, 2\,,$ or $3$\,.

\noindent We start with the easiest case\,.

\begin{theo}

Suppose that $\left|\Spec(\X)\right| = 1$. Then there exists $a \in \left(0\,, \infty\right)$ such that $\B = a\A$, and we have
\begin{equation*}
\A \sigma \B = f(a)\A\,.
\end{equation*}

\end{theo}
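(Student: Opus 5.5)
The plan is to work with the normalized matrix $\X = \A^{-\frac{1}{2}}\B\A^{-\frac{1}{2}} \in \mathscr{P}_{3}(\mathbb{D})$ and to show first that it is a scalar matrix.

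By hypothesis $\Spec(\X) = \{a\}$ for some $a$. Since $\X$ is positive, Remark \ref{RemarkPreliminaries} (item 3) gives $a \in (0,\infty)$.

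Next I would apply the spectral decomposition from the same remark. It gives $\X = \U\Lambda\U^{*}$ with $\U \in \U_{\mathbb{D}}$ and $\Lambda = \diag(a,a,a) = a\Id_{3}$. This step is the one point needing care in the quaternionic case, since scalars must commute with $\U$. Here $a$ is real, so it lies in the center of $\mathbb{H}$, and therefore $\X = a\U\U^{*} = a\Id_{3}$.

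Conjugating back, I get $\B = \A^{\frac{1}{2}}\X\A^{\frac{1}{2}} = a\A$, which proves the first claim.

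For the mean, I would use functional calculus on the same decomposition: $f(\X) = \U f(\Lambda)\U^{*} = f(a)\Id_{3}$. Theorem \ref{TheoremKuboAndo} then gives
\begin{equation*}
\A \sigma \B = \A^{\frac{1}{2}}f(\X)\A^{\frac{1}{2}} = f(a)\A\,.
\end{equation*}
Alternatively, this is Theorem \ref{TheoremGeneralDecomposition} with $r=1$. In that case $c_{0} = f(\lambda_{1})$, because the empty product is $1$ and $e_{0}=1$.

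I do not expect a real obstacle. The only delicate point is the justification that $\X$ is scalar, which relies on diagonalizability of Hermitian matrices together with the realness of the eigenvalue.
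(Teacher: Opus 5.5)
Your proposal is correct and follows the paper's proof: show $\X = a\Id_{3}$ with $a>0$, deduce $\B = a\A$, and use $f(a\Id_{3}) = f(a)\Id_{3}$ in the Kubo--Ando formula. The paper simply asserts that $\X$ is scalar, and your justification of this (spectral decomposition, with $a$ real and hence central in $\mathbb{H}$) fills in that step.
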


\begin{proof}

Suppose that $\left|\Spec(\X)\right| = 1$. Therefore, there exists $a \in \mathbb{D}$ such that $\X = a\Id_{3}$. Moreover, using that $\X \in \mathscr{P}_{n}(\mathbb{D})$, it follows that $a > 0$. From the equation 
\begin{equation*}
\A^{-\frac{1}{2}}\B\A^{-\frac{1}{2}} = a\Id_{3}\,,
\end{equation*}
we get $\B = a\A$, so $\X = \A^{-\frac{1}{2}}\left(a\A\right)\A^{-\frac{1}{2}} = a\Id_{3}$. Finally, using that $f(a\Id_{3}) = f(a)\Id_{3}$, we get
\begin{equation*}
\A \sigma \B = \A^{\frac{1}{2}}f(\X)\A^{\frac{1}{2}} = \A^{\frac{1}{2}}f(a\Id_{3})\A^{\frac{1}{2}} = \A^{\frac{1}{2}}\left(f(a)\Id_{3}\right)\A^{\frac{1}{2}} = f(a)\A\,.
\end{equation*}

\end{proof}

\begin{theo}

Suppose that $\left|\Spec(\X)\right| = 2$. Suppose that $\lambda_{1}$ has geometric multiplicity $2$ and $\lambda_{2}$ has geometric multiplicity $1$. Then
\begin{equation*}
\A \sigma \B = \frac{\lambda_{1}f(\lambda_{2}) - \lambda_{2}f(\lambda_{1})}{\lambda_{1} - \lambda_{2}}\A + \frac{f(\lambda_{1}) - f(\lambda_{2})}{\lambda_{1} - \lambda_{2}}\B\,,
\end{equation*}
with
\begin{equation}
\lambda_{1} = \frac{2\tr(\X) + a\sqrt{6\tr(\X^{2}) - 2\tr(\X)^{2}}}{6}\,, \qquad \lambda_{2} = \frac{\tr(\X) - 2a\sqrt{6\tr(\X^{2}) - 2\tr(\X)^{2}}}{3}\,,
\label{LambdaOneLambdaTwo}
\end{equation}
and where $a \in \left\{-1\,, 1\right\}$ is chosen so that $\det(\X) = \lambda^{2}_{1}\lambda_{2}$\,.

\end{theo}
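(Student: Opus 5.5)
The plan is to reduce everything to Proposition \ref{PropositionSpectrumTwo} and then recover $\lambda_1,\lambda_2$ from the power sums of $\X$. By Remark \ref{RemarkPreliminaries}, $\Spec(\A^{-1}\B)=\Spec(\X)=\{\lambda_1,\lambda_2\}$. Proposition \ref{PropositionSpectrumTwo} then gives the stated affine expression for $\A\sigma\B$ directly, so the only remaining content is the identification of $\lambda_1,\lambda_2$ through Equation \eqref{LambdaOneLambdaTwo}.

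For that, use the spectral decomposition $\X=\U\diag(\lambda_1,\lambda_1,\lambda_2)\U^{*}$ with $\U\in\U_{\mathbb{D}}$. Trace, determinant and powers are computed through the real diagonal matrix; in the quaternionic case I would note that $\tr$ of a Hermitian matrix is real and is invariant under unitary conjugation. This gives
\begin{equation*}
\tr(\X)=2\lambda_1+\lambda_2,\qquad \tr(\X^2)=2\lambda_1^2+\lambda_2^2,\qquad \det(\X)=\lambda_1^2\lambda_2 .
\end{equation*}
A direct expansion then yields $3\tr(\X^2)-\tr(\X)^2=2(\lambda_1-\lambda_2)^2$, so $\sqrt{6\tr(\X^2)-2\tr(\X)^2}=2|\lambda_1-\lambda_2|$.

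Set $a=\operatorname{sign}(\lambda_1-\lambda_2)\in\{-1,1\}$, so that $a\sqrt{6\tr(\X^2)-2\tr(\X)^2}=2(\lambda_1-\lambda_2)$. Solving the linear system $2\lambda_1+\lambda_2=\tr(\X)$, $\lambda_1-\lambda_2=\tfrac{a}{2}\sqrt{\cdots}$ gives
\begin{equation*}
\lambda_1=\frac{2\tr(\X)+a\sqrt{\cdots}}{6},\qquad \lambda_2=\frac{\tr(\X)-a\sqrt{\cdots}}{3}.
\end{equation*}
The first of these matches Equation \eqref{LambdaOneLambdaTwo}. The second differs from the printed formula: the printed factor $2a$ would give $(-2\lambda_1+5\lambda_2)/3$ rather than $\lambda_2$. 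So I expect the coefficient in front of the root in $\lambda_2$ to need correcting to $a$, and I would prove the corrected version.

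The main obstacle is justifying that the determinant condition singles out $a$. With the correct sign, $\det(\X)=\lambda_1^2\lambda_2$ holds automatically. The wrong sign produces the pair $\mu_1=(\lambda_1+2\lambda_2)/3$, $\mu_2=(4\lambda_1-\lambda_2)/3$, and I would compare $\mu_1^2\mu_2$ with $\lambda_1^2\lambda_2$. Writing $\lambda_1=\lambda_2+d$, the difference should factor as $d^2$ times a linear form in $(\lambda_2,d)$. This means the determinant test fails to distinguish the two signs only on an exceptional line. There one would additionally need positivity of $\mu_2$, or simply interpret $a$ as $\operatorname{sign}(\lambda_1-\lambda_2)$, to resolve the choice; I would state that caveat explicitly.
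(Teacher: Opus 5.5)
Your argument is essentially the paper's (Proposition~\ref{PropositionSpectrumTwo}, then solving $\tr(\X)=2\lambda_{1}+\lambda_{2}$, $\tr(\X^{2})=2\lambda_{1}^{2}+\lambda_{2}^{2}$ for the eigenvalues). You are right that the printed formula for $\lambda_{2}$ is wrong. The paper's own last step $\lambda_{2}=\tr(\X)-2\lambda_{1}$ actually gives $\lambda_{2}=\frac{\tr(\X)-a\sqrt{6\tr(\X^{2})-2\tr(\X)^{2}}}{3}$. With the printed factor $2a$, no sign satisfies the determinant condition at all: for $\X=\diag(1,1,2)$ the two choices give $\lambda_{1}^{2}\lambda_{2}\in\{0,\tfrac{8}{3}\}$, while $\det(\X)=2$.

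The caveat you anticipate is unnecessary. Finishing your comparison gives exactly $\mu_{1}^{2}\mu_{2}-\lambda_{1}^{2}\lambda_{2}=\frac{4}{27}(\lambda_{1}-\lambda_{2})^{3}$, which is nonzero because $\lambda_{1}\neq\lambda_{2}$. So the determinant condition always singles out $a=\mathrm{sign}(\lambda_{1}-\lambda_{2})$ uniquely, and no positivity argument is needed.
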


\begin{proof}

It follows from Proposition \ref{PropositionSpectrumTwo} that $\A \sigma \B$ is of the form
\begin{equation*}
\A \sigma \B = \frac{\lambda_{1}f(\lambda_{2}) - \lambda_{2}f(\lambda_{1})}{\lambda_{1} - \lambda_{2}}\A + \frac{f(\lambda_{1}) - f(\lambda_{2})}{\lambda_{1} - \lambda_{2}}\B\,,
\end{equation*}
with $\Spec(\A^{-\frac{1}{2}}\B\A^{-\frac{1}{2}}) = \left\{\lambda_{1}\,, \lambda_{2}\right\}$. We need to prove that $\lambda_{1}$ and $\lambda_{2}$ are the ones given in Equation \eqref{LambdaOneLambdaTwo}. To simplify, we will use the following notations:
\begin{equation*}
\T_{1} = \tr(\X)\,, \qquad \T_{2} = \tr(\X^{2})\,.
\end{equation*}
Then
\begin{equation*}
\begin{cases} \T_{1} & = 2\lambda_{1} + \lambda_{2} \\ \T_{2} & = 2\lambda^{2}_{1} + \lambda^{2}_{2} \end{cases}
\end{equation*}
and $\det(\X) = \lambda^{2}_{1}\lambda_{2}$ From the first equation, we get $\lambda_{2} = \T_{1} - 2\lambda_{1}$. Substituting $\lambda_{2}$ in the second equation, we get
\begin{equation*}
\T_{2} = 2\lambda^{2}_{1} + \left(\T_{1} - 2\lambda_{1}\right)^{2} = 6\lambda^{2}_{1} - 4\T_{1}\lambda_{1} + \T^{2}_{1}\,.
\end{equation*}
Thus
\begin{equation*}
6\lambda^{2}_{1} - 4\T_{1}\lambda_{1} + \left(\T^{2}_{1} - \T_{2}\right) = 0\,,
\end{equation*}
i.e.
\begin{equation*}
\lambda_{1} = \frac{2\T_{1} + a\sqrt{6\T_{2} - 2\T^{2}_{1}}}{6}\,, \qquad a \in \left\{-1\,, 1\right\}\,.
\end{equation*}
Therefore
\begin{equation*}
\lambda_{2} = \T_{1} - 2\lambda_{1} = \frac{\T_{1} - 2a\sqrt{6\T_{2} - 2\T^{2}_{1}}}{3}\,.
\end{equation*}
and the sign $a$ is chosen so that $\det(\X) = \lambda^{2}_{1}\lambda_{2}$.

\end{proof}

\medskip

\noindent Now assume that $\X \in \mathscr{P}_{3}(\mathbb{D})$ is such that $\left|\Spec(\X)\right|=3$. By the Cayley-Hamilton theorem, we have
\begin{equation*}
\X^{3} - \tr(\X)\X^{2} + \frac{\tr(\X)^{2}-\tr(\X^{2})}{2}\X - \det(\X)\Id_{3} = 0\,,
\end{equation*}
In other words, the eigenvalues of $\X$ are the roots of
\begin{equation*}
x^{3} - \tr(\X)x^{2} + \frac{\tr(\X)^{2}-\tr(\X^{2})}{2}x - \det(\X) = 0\,.
\end{equation*}
We will use the following notation:
\begin{equation*}
\T = \tr(\X)\,, \qquad \Q = \tr(\X^{2})\,, \qquad \D=\det(\X)\,, \qquad \S=\frac{\T^{2} - \Q}{2}\,.
\end{equation*}
Then the characteristic polynomial is
\begin{equation*}
x^{3} -\T x^{2} + \S x - \D\,.
\end{equation*}
Using the change of variable $x = y+\frac{\T}{3}$, we get the equation
\begin{equation*}
y^{3} + py + q = 0\,,
\end{equation*}
where
\begin{equation*}
\p = \S - \frac{\T^{2}}{3}\,, \qquad \q = -\frac{2\T^{3}}{27} + \frac{\T\S}{3}-\D\,.
\end{equation*}
Let
\begin{equation*}
\Delta = \left(\frac{\q}{2}\right)^{2} + \left(\frac{\p}{3}\right)^{3}\,, \qquad  \omega=\frac{-1+i\sqrt{3}}{2}\,.
\end{equation*}
By Cardano's formula, the three roots are
\begin{equation}
\lambda_{k} = \frac{\T}{3} + \omega^{k}\sqrt[3]{-\frac{\q}{2}+\sqrt{\Delta}} + \omega^{2k}\sqrt[3]{-\frac{\q}{2}-\sqrt{\Delta}}\,, \qquad \left(k = 0\,, 1\,, 2\right)\,.
\label{EquationRootsLambdak}
\end{equation}
Therefore, when $\left|\Spec(\X)\right| = 3$, the three distinct spectral values of $\X$ are explicitly determined by $\tr(\X)$, $\tr(\X^{2})$, and $\det(\X)$\,.

\begin{remark}

Since $\X$ is Hermitian, we have $\Spec(\X) \subseteq \mathbb{R}$. In this case, it follows that the discriminant
\begin{equation*}
\Delta = \left(\frac{\q}{2}\right)^{2}+\left(\frac{\p}{3}\right)^{3}
\end{equation*}
is negative. In particular,
\begin{equation*}
-\frac{\q}{2}+\sqrt{\Delta} \qquad \text{and} \qquad -\frac{\q}{2} - \sqrt{\Delta}
\end{equation*}
are complex conjugates. Writing
\begin{equation*}
-\frac{\q}{2}+\sqrt{\Delta} = \rho e^{i\theta}\,,
\end{equation*}
with
\begin{equation*}
\rho = \left|-\frac{\q}{2} + \sqrt{\Delta}\right| = \left|-\frac{\q}{2} + i\sqrt{-\Delta}\right| = \left(-\frac{\p}{3}\right)^{3/2}\,.
\end{equation*}
it follows that the roots given in Equation \eqref{EquationRootsLambdak} can be rewritten as
\begin{equation*}
\lambda_{k} = \frac{\T}{3} + 2\sqrt{-\frac{\p}{3}}\cos\left(\frac{\theta+2k\pi}{3}\right)\,, \qquad \left(k=0\,, 1\,, 2\right)\,,
\end{equation*}
with
\begin{equation*}
\theta = \arccos\left(-\frac{\q}{2}\left(-\frac{3}{\p}\right)^{3/2}\right)\,.
\end{equation*}
It is now easier to see why the roots $\lambda_{k}$ are real\,.

\label{RemarkCardano}

\end{remark}

\begin{theo}

Let $\A\,, \B \in \mathscr{P}_{3}(\mathbb{D})$ such that $\left|\Spec(\A^{-\frac{1}{2}}\B\A^{-\frac{1}{2}})\right| = 3$. Then
\begin{equation*}
\A \sigma \B = c_{0}\A + c_{1}\B + c_{2}\B\A^{-1}\B\,,
\end{equation*}
where
\begin{align*}
c_{0} & = \frac{f(\lambda_{1})\lambda_{2}\lambda_{3}}{(\lambda_{1}-\lambda_{2})(\lambda_{1}-\lambda_{3})} - \frac{f(\lambda_{2})\lambda_{1}\lambda_{3}}{(\lambda_{1}-\lambda_{2})(\lambda_{2}-\lambda_{3})} + \frac{f(\lambda_{3})\lambda_{1}\lambda_{2}}{(\lambda_{1}-\lambda_{3})(\lambda_{2}-\lambda_{3})}\,, \\ 
c_{1} & = -\frac{f(\lambda_{1})(\lambda_{2}+\lambda_{3})}{(\lambda_{1}-\lambda_{2})(\lambda_{1}-\lambda_{3})} + \frac{f(\lambda_{2})(\lambda_{1}+\lambda_{3})}{(\lambda_{1}-\lambda_{2})(\lambda_{2}-\lambda_{3})} - \frac{f(\lambda_{3})(\lambda_{1}+\lambda_{2})}{(\lambda_{1}-\lambda_{3})(\lambda_{2}-\lambda_{3})}\,, \\ 
c_{2} & =\frac{f(\lambda_{1})}{(\lambda_{1}-\lambda_{2})(\lambda_{1}-\lambda_{3})} - \frac{f(\lambda_{2})}{(\lambda_{1}-\lambda_{2})(\lambda_{2}-\lambda_{3})} + \frac{f(\lambda_{3})}{(\lambda_{1}-\lambda_{3})(\lambda_{2}-\lambda_{3})}\,.
\end{align*}
and where $\lambda_{1}\,, \lambda_{2}\,, \lambda_{3}$ are given in Remark \ref{RemarkCardano} and depend only on $\det(\X)\,, \tr(\X)\,,$ and $\tr(\X^{2})$\,.

\end{theo}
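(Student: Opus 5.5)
This is the case $r = 3$ of Theorem \ref{TheoremGeneralDecomposition}, so we will specialize that result and then make the coefficients in Equation \eqref{CoefficientsCK} explicit.

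First, set $\X = \A^{-\frac{1}{2}}\B\A^{-\frac{1}{2}}$. By Remark \ref{RemarkPreliminaries}, $\Spec(\X) = \Spec(\A^{-1}\B) = \{\lambda_{1}, \lambda_{2}, \lambda_{3}\}$. Proposition \ref{PropositionExsitenceP} then gives $f(\X) = c_{0}\Id_{3} + c_{1}\X + c_{2}\X^{2}$. Sandwiching by $\A^{\frac{1}{2}}$, exactly as in the proof of Theorem \ref{TheoremGeneralDecomposition}, gives
\begin{equation*}
\A\sigma\B = c_{0}\A + c_{1}\B + c_{2}\B\A^{-1}\B,
\end{equation*}
since $\A^{\frac{1}{2}}\X\A^{\frac{1}{2}} = \B$ and $\A^{\frac{1}{2}}\X^{2}\A^{\frac{1}{2}} = \B\A^{-1}\B$.

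Second, we evaluate Equation \eqref{CoefficientsCK} with $r = 3$. For $k = 2$ we use $e_{0} = 1$ and sign $+$. For $k = 1$ we use $e_{1}(\widehat{\lambda_{i}})$, the sum of the other two roots, with sign $-$. For $k = 0$ we use $e_{2}(\widehat{\lambda_{i}})$, the product of the other two roots, with sign $+$. The denominators are
\begin{equation*}
\prod_{j\neq i}(\lambda_{i}-\lambda_{j}) = (\lambda_{1}-\lambda_{2})(\lambda_{1}-\lambda_{3}),\quad (\lambda_{2}-\lambda_{1})(\lambda_{2}-\lambda_{3}),\quad (\lambda_{3}-\lambda_{1})(\lambda_{3}-\lambda_{2})
\end{equation*}
for $i = 1, 2, 3$. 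We rewrite these as $(\lambda_{1}-\lambda_{2})(\lambda_{1}-\lambda_{3})$, $-(\lambda_{1}-\lambda_{2})(\lambda_{2}-\lambda_{3})$ and $(\lambda_{1}-\lambda_{3})(\lambda_{2}-\lambda_{3})$. This produces exactly the sign pattern $+,-,+$ (multiplied by the overall sign $(-1)^{2-k}$) that appears in the stated formulas for $c_{0}, c_{1}, c_{2}$. As a sanity check, one can verify directly that $c_{0} + c_{1}\lambda_{i} + c_{2}\lambda_{i}^{2} = f(\lambda_{i})$, which is just Lagrange interpolation.

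Finally, the claim that the $\lambda_{i}$ depend only on $\det(\X)$, $\tr(\X)$ and $\tr(\X^{2})$ follows from Cayley--Hamilton. The characteristic polynomial is $x^{3} - \T x^{2} + \S x - \D$ with $\S = (\T^{2} - \Q)/2$, and Remark \ref{RemarkCardano} expresses its three real roots through $\p$, $\q$ and $\theta$, all of which are functions of $\T, \Q, \D$. Since $\X$ is Hermitian with three distinct real eigenvalues, we have $\p < 0$ and $\Delta < 0$, so the trigonometric form of Remark \ref{RemarkCardano} is valid.

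The only real obstacle is the bookkeeping of signs when the denominators are reordered into the form used in the statement. The labelling of the $\lambda_{k}$ from Remark \ref{RemarkCardano} is irrelevant, because the interpolation polynomial is symmetric in the nodes.
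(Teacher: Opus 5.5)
Your proposal is correct and follows the same route as the paper, whose proof simply cites Theorem \ref{TheoremGeneralDecomposition} with $r=3$ together with Remark \ref{RemarkCardano}. Your explicit evaluation of Equation \eqref{CoefficientsCK}, including the sign bookkeeping for the reordered denominators and the remark that the labelling of the roots does not matter, is exactly the detail that the paper's one-line proof leaves implicit.
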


\begin{proof}

Follows from Theorem \ref{TheoremGeneralDecomposition} and Remark \ref{RemarkCardano}\,.

\end{proof}

\begin{remark}

\begin{enumerate}

\item Although the coefficients $c_{0}\,, c_{1}\,, c_{2}$ are expressed in terms of the eigenvalues of $\X$, they can be computed easily. Indeed, the eigenvalues of a $3$ by $3$ matrix can be obtained explicitly from the coefficients of its characteristic polynomial using Cardano's formula. Consequently, the previous decomposition yields an effective procedure for computing any Kubo-Ando mean, and can be implemented directly in symbolic computation software such as Matlab or Python\,.

\noindent This is useful to reduce computation complexity for means of matrices. This is especially important for means that involve logarithms, which are computationally intensive.
\item Similar formulas can be obtained for $4$ by $4$ matrices. Indeed, if $\X \in \mathscr{P}_{4}(\mathbb{D})$, then the Cayley-Hamilton theorem gives \small{
\begin{equation*}
\X^{4} - \tr(\X)\X^{3} + \frac{\tr(\X)^{2}-\tr(\X^{2})}{2}\X^{2} -\frac{\tr(\X)^{3}-3\tr(\X)\tr(\X^{2})+2\tr(\X^{3})}{6}\X + \det(\X)\Id_{4} = 0\,.
\end{equation*}}
Therefore, the eigenvalues of $\X$ are roots of a quartic polynomial whose coefficients depend only on
\begin{equation*}
\tr(\X)\,, \qquad \tr(\X^{2})\,, \qquad \tr(\X^{3})\,, \qquad \det(\X)\,.
\end{equation*}
By Ferrari's formula, these roots can be expressed explicitly in terms of these quantities\,.
\end{enumerate}

\end{remark}

\section{Compatibility with the embedding $\mathscr{P}_{n}(\mathbb{C}) \hookrightarrow \mathscr{P}_{2n}(\mathbb{R})$}

We now recall some results of \cite{FRANCOMERINO2}. Any matrix $\X \in \Mat_{n}(\mathbb{C})$ can be written as $\X = \A + i\B$, with $\A\,, \B \in \Mat_{n}(\mathbb{R})$. Let $\Psi$ be the map given by
\begin{equation*}
\Psi: \Mat_{n}(\mathbb{C}) \ni \X = \A + i\B \to \begin{pmatrix} \A & \B \\ -\B & \A \end{pmatrix} \in \Mat_{2n}(\mathbb{R})\,.
\end{equation*}
The map $\Psi$ is a monomorphism of algebras and such that
\begin{equation*}
\Psi(\X^{*}) = \Psi(\X)^{t}\,, \qquad \left(\X \in \Mat_{n}(\mathbb{C})\right)\,.
\end{equation*}
As explained in \cite[Proposition~2.5]{FRANCOMERINO2}, we have $\Psi(\mathscr{P}_{n}(\mathbb{C})) \subseteq \mathscr{P}_{2n}(\mathbb{R})$. More precisely, we have
\begin{equation*}
\Psi(\mathscr{P}_{n}(\mathbb{C})) = \left\{\X \in \mathscr{P}_{2n}(\mathbb{R})\,, \X\K_{n} = \K_{n}\X\right\}\,,
\end{equation*}
where $\K_{n} = \Psi(i\Id_{n})$. We denote by $\widetilde{\mathscr{P}}_{n}(\mathbb{C})$ the subset $\Psi(\mathscr{P}_{n}(\mathbb{C}))$ of $\mathscr{P}_{2n}(\mathbb{R})$\,.

\noindent Let $\sigma_{1}$ be a Kubo-Ando mean on $\mathscr{P}_{n}(\mathbb{C})$. The mean $\sigma_{1}$ defines a map $\widetilde{\sigma}_{1}$:
\begin{equation*}
\widetilde{\sigma}_{1}: \widetilde{\mathscr{P}}_{n}(\mathbb{C}) \times \widetilde{\mathscr{P}}_{n}(\mathbb{C}) \mapsto \widetilde{\mathscr{P}}_{n}(\mathbb{C})
\end{equation*}
given by
\begin{equation}
\A \widetilde{\sigma}_{1} \B = \Psi\left(\Psi^{-1}(\A) \sigma_{1} \Psi^{-1}(\B)\right)\,, \qquad \left(\A\,, \B \in \widetilde{\mathscr{P}}_{n}(\mathbb{C})\right)\,.
\label{CompatibilityPsiMean}
\end{equation}

\noindent As explained in \cite{FRANCOMERINO2}, we have a bijective correspondence between Kubo-Ando means on $\mathscr{P}_{n}(\mathbb{C})$ and $\mathscr{P}_{2n}(\mathbb{R})$. More precisely, there exists a unique Kubo-Ando mean $\sigma_{2}$ of $\mathscr{P}_{2n}(\mathbb{R})$ such that the restriction of $\sigma_{2}$ to $\widetilde{\mathscr{P}}_{n}(\mathbb{C})$ is $\widetilde{\sigma}_{1}$, and such that
\begin{equation*}
\Psi(\A \sigma_{1} \B) = \Psi(\A) \sigma_{2} \Psi(\B)\,, \qquad \left(\A\,, \B \in \mathscr{P}_{n}(\mathbb{C})\right)\,.
\end{equation*}

\begin{theo}

Suppose that $\A\,, \B \in \mathscr{P}_{n}(\mathbb{C})$ such that $\left|\Spec(\A^{-1}\B)\right| = r$, and that
\begin{equation*}
\A \sigma_{1} \B = \sum\limits_{i=0}^{r-1} c_{i}\,\A(\A^{-1}\B)^{i}\,,
\end{equation*}
where the coefficients $c_{0}\,, \ldots\,, c_{r-1}$ are given by Theorem~\ref{TheoremGeneralDecomposition}. Then
\begin{equation}
\Psi(\A) \sigma_{2} \Psi(\B) = \sum\limits_{i=0}^{r-1} c_{i} \Psi(\A)\bigl(\Psi(\A)^{-1}\Psi(\B)\bigr)^{i}\,.
\label{EquationPsiAPsiB}
\end{equation}
In particular, the decomposition of Theorem~\ref{TheoremGeneralDecomposition} is preserved under the embedding $\Psi$ and the coefficients $c_{i}$ are unchanged\,.

\label{TheoremEmbedding}

\end{theo}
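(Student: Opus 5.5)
The plan is to apply $\Psi$ to the given decomposition and use that $\Psi$ is a monomorphism of unital algebras. By the compatibility $\Psi(\A \sigma_{1} \B) = \Psi(\A)\sigma_{2}\Psi(\B)$ recalled above, the left-hand side of Equation \eqref{EquationPsiAPsiB} equals $\Psi(\A\sigma_{1}\B)$. This in turn equals
\begin{equation*}
\Psi\Bigl(\sum_{i=0}^{r-1} c_{i}\A(\A^{-1}\B)^{i}\Bigr) = \sum_{i=0}^{r-1} c_{i}\Psi(\A)\bigl(\Psi(\A^{-1})\Psi(\B)\bigr)^{i}.
\end{equation*}
Here we use that the $c_{i}$ are real scalars and that $\Psi$ is $\mathbb{R}$-linear and multiplicative. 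Since $\Psi(\Id_{n}) = \Id_{2n}$ and $\Psi$ is multiplicative, $\Psi(\A)\Psi(\A^{-1}) = \Id_{2n}$, so $\Psi(\A^{-1}) = \Psi(\A)^{-1}$. This gives exactly Equation \eqref{EquationPsiAPsiB}.

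It remains to justify the last sentence: these $c_{i}$ are precisely the coefficients that Theorem \ref{TheoremGeneralDecomposition} would produce directly for the pair $(\Psi(\A),\Psi(\B))$ in $\mathscr{P}_{2n}(\mathbb{R})$. First, $\sigma_{2}$ has the same representing function $f$ as $\sigma_{1}$. This holds because $\Psi$ commutes with functional calculus: if $\X = \U\Lambda\U^{*}$, then $\Psi(\X) = \Psi(\U)\Psi(\Lambda)\Psi(\U)^{t}$, with $\Psi(\U)$ orthogonal and $\Psi(\Lambda) = \diag(\Lambda,\Lambda)$. Hence $\Psi(f(\X)) = f(\Psi(\X))$, and likewise $\Psi(\A^{\frac{1}{2}}) = \Psi(\A)^{\frac{1}{2}}$ by uniqueness of positive square roots. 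Second, the same block computation applied to $\X = \A^{-\frac{1}{2}}\B\A^{-\frac{1}{2}}$ shows that $\Psi(\X)$ is $\Psi(\A)^{-\frac{1}{2}}\Psi(\B)\Psi(\A)^{-\frac{1}{2}}$ and has the same eigenvalues as $\X$, each with doubled multiplicity. So $\Spec(\Psi(\A)^{-1}\Psi(\B)) = \{\lambda_{1},\ldots,\lambda_{r}\}$, still of cardinality $r$. Since Equation \eqref{CoefficientsCK} depends only on $f$ and this set, the coefficients coincide.

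The main obstacle I expect is the identification of the representing function of $\sigma_{2}$ with $f$; the rest is formal. I would establish it through the equality $\Psi(f(\X)) = f(\Psi(\X))$ on $\widetilde{\mathscr{P}}_{n}(\mathbb{C})$, evaluated at scalar matrices $\X = t\Id_{n}$. There $\Psi(t\Id_{n}) = t\Id_{2n}$ and $\Id_{2n}\sigma_{2}(t\Id_{2n}) = \Psi(\Id_{n}\sigma_{1}t\Id_{n}) = f(t)\Id_{2n}$, which forces the representing function of $\sigma_{2}$ to equal $f$ by Theorem \ref{TheoremKuboAndo}.
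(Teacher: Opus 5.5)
Your proposal is correct and follows the paper's argument. The paper likewise applies $\Psi$ to the decomposition using $\Psi(\A\sigma_{1}\B)=\Psi(\A)\sigma_{2}\Psi(\B)$, $\mathbb{R}$-linearity, multiplicativity and $\Psi(\A^{-1})=\Psi(\A)^{-1}$, and then notes that $\Spec(\Psi(\A)^{-1}\Psi(\B))=\Spec(\A^{-1}\B)$ with doubled multiplicities. Your additional check that $\sigma_{2}$ has the same representing function $f$, via $\Id_{2n}\sigma_{2}(t\Id_{2n})=\Psi(\Id_{n}\sigma_{1}t\Id_{n})=f(t)\Id_{2n}$, is correct and justifies a step the paper leaves implicit in its appeal to the earlier embedding paper.
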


\begin{proof}

Let $\A\,, \B \in \mathscr{P}_{n}(\mathbb{C})$ such that $\left|\Spec(\A^{-1}\B)\right| = r$ and 
\begin{equation*}
\A \sigma_{1} \B = \sum\limits_{i=0}^{r-1} c_{i}\,\A(\A^{-1}\B)^{i}\,,
\end{equation*}
Using that the restriction of $\sigma_{2}$ to $\widetilde{\mathscr{P}}_{n}(\mathbb{C})$ is equal to $\widetilde{\sigma}_{1}$, it follows from Equation \eqref{CompatibilityPsiMean} that
\begin{eqnarray*}
\Psi(\A) \sigma_{2} \Psi(\B) & = & \Psi(\A) \widetilde{\sigma}_{1} \Psi(\B) = \Psi\left(\Psi^{-1}(\Psi(\A)) \sigma_{1} \Psi^{-1}(\Psi(\B))\right) \\ 
& = & \Psi(\A \sigma_{1} \B) = \Psi\left(\sum\limits_{i=0}^{r-1} c_{i}\A(\A^{-1}\B)^{i}\right) = \sum\limits_{i = 0}^{r-1} c_{i} \Psi\left(\A(\A^{-1}\B)^{i}\right) \\ 
& = & \sum\limits_{i = 0}^{r-1} c_{i} \Psi(\A)\Psi\left((\A^{-1}\B)^{i}\right) = \sum\limits_{i = 0}^{r-1} c_{i} \Psi(\A)\left(\Psi(\A)^{-1}\Psi(\B))^{i}\right)\,.
\end{eqnarray*}
Moreover, as explained in \cite{FRANCOMERINO2},
\begin{equation*}
\Spec\left(\Psi(\A)^{-1}\Psi(\B)\right) = \Spec\left(\Psi(\A^{-1}\B)\right) = \Spec(\A^{-1}\B)\,,
\end{equation*}
the only difference being that each eigenvalue occurs with twice its multiplicity. Hence
\begin{equation*}
\left|\Spec\left(\Psi(\A)^{-1}\Psi(\B)\right)\right| = \left|\Spec(\A^{-1}\B)\right| = r\,.
\end{equation*}
i.e. the decomposition given in \ref{EquationPsiAPsiB} is minimal, so the degree of the interpolating polynomial cannot be reduced\,.

\end{proof}

\begin{remark}

In \cite{FRANCOMERINO2}, we also defined an embedding of the cone $\mathscr{P}_{n}(\mathbb{H})$ into $\mathscr{P}_{2n}(\mathbb{C})$ and proved, as explained above, a one-to-one correspondence between the Kubo-Ando means on $\mathscr{P}_{n}(\mathbb{H})$ and $\mathscr{P}_{2n}(\mathbb{C})$. One can show that Theorem \ref{TheoremEmbedding} is still valid if $\A$ and $\B$ are two matrices in $\mathscr{P}_{n}(\mathbb{H})$, i.e. the polynomial coefficients of $\A \sigma \B$ and $\Psi(\A) \sigma \Psi(\B)$ are unchanged\,.

\end{remark}

\section{Extension to alternative means}

We start with a definition \cite{DumitruFrancoKimCzerwinska2026}.

\begin{definition}

Let $f: \left(0\,, \infty\right) \rightarrow \left(0\,, \infty\right)$ be a continuous function such that $f(1)=1$. The \emph{alternative mean associated with $f$} is the binary operation
\begin{equation*}
\hat{\sigma}_{f}: \mathscr{P}_{n}(\mathbb{D}) \times \mathscr{P}_{n}(\mathbb{D}) \to \mathscr{P}_{n}(\mathbb{D})
\end{equation*}
defined by
\begin{equation*}
\A \hat{\sigma}_{f} \B = f\left(\A^{-1} \sharp \B\right)\A f\left(\A^{-1}\sharp \B\right)\,, \qquad \left(\A\,, \B \in \mathscr{P}_{n}(\mathbb{D})\right)\,,
\end{equation*}
where $\sharp$ denotes the geometric mean on $\mathscr{P}_{n}(\mathbb{D})$.

\end{definition}

\begin{remark}

Since $f(1)=1$, we have
\begin{equation*}
\A \hat{\sigma}_{f} \A = f(\Id_{n})\A f(\Id_{n}) = \A\,,
\end{equation*}
for every $\A \in \mathscr{P}_{n}(\mathbb{D})$. Thus, $\hat{\sigma}_{f}$ is normalized in the sense that the mean of a matrix with itself is equal to the matrix\,.

\end{remark}

\noindent In this section, we take $\A\,, \B \in \mathscr{P}_{n}(\mathbb{D})$ such that $\A\B = \B\A$. In this case, we have
\begin{equation*}
\A^{-1} \sharp \B = \left(\A^{-1}\B\right)^{\frac{1}{2}}\,.
\end{equation*}
Indeed, when $\A\B = \B\A$ the matrices $\A$ and $\B$ are simultaneously diagonalizable, and the identity follows directly from the definition of the geometric mean\,.

\begin{remark}

From the equation $\A\B = \B\A$, we get that
\begin{equation*}
\left(\A^{-1}\B\right)\A = \A\left(\A^{-1}\B\right)\,, \qquad \left(\A^{-1}\B\right)\B = \B\left(\A^{-1}\B\right)\,.
\end{equation*}
More generally, for all $k \in \mathbb{N}$, we have
\begin{equation*}
\left(\A^{-1}\B\right)^{\frac{k}{2}}\A = \A\left(\A^{-1}\B\right)^{\frac{k}{2}}\,, \qquad \left(\A^{-1}\B\right)^{\frac{k}{2}}\B = \B\left(\A^{-1}\B\right)^{\frac{k}{2}}\,,
\end{equation*}
and
\begin{equation*}
f(\A^{-1} \sharp \B)\A = \A f(\A^{-1} \sharp \B)\,, \qquad f(\A^{-1} \sharp \B)\B = \B f(\A^{-1} \sharp \B)\,.
\end{equation*}
\label{RemarkAlternativeMeans}

\end{remark}

\noindent Therefore, it follows from Remark \ref{RemarkAlternativeMeans} that
\begin{equation}
\A \hat{\sigma}_{f} \B = \A f\left(\A^{-1} \sharp \B\right)^{2} = \A f\left((\A^{-1}\B)^{\frac{1}{2}}\right)^{2}\,.
\label{EquationAlternativeMeans}
\end{equation}

\noindent Let $r = \left|\Spec(\A^{-1}\B)\right|$, with $\Spec(\A^{-1}\B) = \left\{\lambda_{1}\,, \ldots\,, \lambda_{r}\right\}$. Then $\Spec\left(\left(\A^{-1}\B\right)^{\frac{1}{2}}\right) = \left\{\sqrt{\lambda_{1}}\,, \ldots\,, \sqrt{\lambda_{r}}\right\}$. Therefore
\begin{equation*}
r = \left|\Spec(\A^{-1}\B)\right| = \left|\Spec\left(\left(\A^{-1}\B\right)^{\frac{1}{2}}\right)\right|\,.
\end{equation*}

\noindent As explained in Section \ref{SectionDecompositionKAMeans}, there exists a unique polynomial $\Q$ of degree $r-1$ such that
\begin{equation*}
\Q(\sqrt{\lambda_{i}}) = f(\sqrt{\lambda_{i}})\,, \qquad \left(1 \leq i \leq r\right)\,,
\end{equation*}
and by functional calculus, we have
\begin{equation*}
f\left(\left(\A^{-1}\B\right)^{\frac{1}{2}}\right) = \Q\left(\left(\A^{-1}\B\right)^{\frac{1}{2}}\right)\,.
\end{equation*}
More precisely, using Equation \eqref{CoefficientsCK}, the polynomial $\Q$ is of the form 
\begin{equation}
\Q(t) = \sum\limits_{i = 0}^{r-1} d_{i}t^{i}\,,
\label{PolynomialQ}
\end{equation}
where the constant $d_{k}$ are given by
\begin{equation}
d_{k} = (-1)^{r-1-k}\sum\limits_{i = 1}^{r} f(\sqrt{\lambda_{i}})\frac{e_{r-1-k}(\widehat{\sqrt{\lambda_{i}}})}{\prod\limits_{j \neq i}\left(\sqrt{\lambda_{i}} - \sqrt{\lambda_{j}}\right)}\,, \qquad \left(0 \leq k \leq r-1\right)\,.
\end{equation}

\begin{theo}

Let $\A\,, \B \in \mathscr{P}_{n}(\mathbb{D})$ be such that $\A\B = \B\A$, and let $r = \left|\Spec(\A^{-1}\B)\right|$. Then
\begin{equation*}
f\left((\A^{-1}\B)^{\frac{1}{2}}\right) = \sum\limits_{i=0}^{r-1} d_{i}(\A^{-1}\B)^{\frac{i}{2}}\,,
\end{equation*}
where the coefficients $d_{0}\,, \ldots\,, d_{r-1}$ are given above.

\noindent Moreover,
\begin{equation*}
\A \hat{\sigma}_{f} \B = \sum\limits_{i=0}^{r-1} \sum\limits_{j=0}^{r-1} d_{i}d_{j}\A(\A^{-1}\B)^{\frac{i+j}{2}}\,.
\end{equation*}

\label{TheoremAlternativeMeans}

\end{theo}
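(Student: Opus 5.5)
The plan is to reduce everything to functional calculus on the single positive matrix $\Y := (\A^{-1}\B)^{\frac{1}{2}}$, and then use commutativity to collapse the alternative mean into a product.

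\emph{Step 1: $\Y$ is a genuine element of $\mathscr{P}_{n}(\mathbb{D})$.} Since $\A\B = \B\A$, the matrices $\A$ and $\B$ are simultaneously unitarily diagonalizable. Write
\begin{equation*}
\A = \U\diag(a_{1}, \ldots, a_{n})\U^{*}\,, \qquad \B = \U\diag(b_{1}, \ldots, b_{n})\U^{*}\,, \qquad \U \in \U_{\mathbb{D}}\,.
\end{equation*}
Then $\A^{-1}\B = \U\diag(b_{k}/a_{k})\U^{*}$ lies in $\mathscr{P}_{n}(\mathbb{D})$. Consequently $\Y = \U\diag(\sqrt{b_{k}/a_{k}})\U^{*}$ is well defined, and $\Spec(\Y) = \{\sqrt{\lambda_{1}}, \ldots, \sqrt{\lambda_{r}}\}$ has exactly $r$ elements. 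In the quaternionic case the simultaneous diagonalization needs a short justification. I would give it by restricting $\B$ to each eigenspace of $\A$, which is a right $\mathbb{H}$-submodule invariant under $\B$, and applying the spectral decomposition of Remark~\ref{RemarkPreliminaries} there. This is the step I expect to require the most care.

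\emph{Step 2: the first identity.} Apply Proposition~\ref{PropositionExsitenceP} to $\Y$, with $f$ now any continuous function; the proof there used only functional calculus, not operator monotonicity. This gives $f(\Y) = \Q(\Y)$, where $\Q$ is the interpolation polynomial in Equation~\eqref{PolynomialQ} with nodes $\sqrt{\lambda_{i}}$. Its coefficients $d_{k}$ follow from the Lagrange computation leading to Equation~\eqref{CoefficientsCK}, with $\lambda_{i}$ replaced by $\sqrt{\lambda_{i}}$. Since $\Y^{i} = (\A^{-1}\B)^{\frac{i}{2}}$ by the definition $\X^{s} = \exp_{\mathbb{D}}(s\log_{\mathbb{D}}\X)$, this is exactly the first claim.

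\emph{Step 3: the second identity.} By Equation~\eqref{EquationAlternativeMeans}, which rests on Remark~\ref{RemarkAlternativeMeans} and the identity $\A^{-1}\sharp\B = \Y$ for commuting pairs, we have $\A\hat{\sigma}_{f}\B = \A f(\Y)^{2}$. Substituting the result of Step 2 and expanding the square of $\sum_{i} d_{i}\Y^{i}$ gives
\begin{equation*}
\A\hat{\sigma}_{f}\B = \sum_{i,j} d_{i}d_{j}\A\Y^{i}\Y^{j}\,.
\end{equation*}
Finally, $\Y^{i}\Y^{j} = \Y^{i+j} = (\A^{-1}\B)^{\frac{i+j}{2}}$, because all powers of $\Y$ commute. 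This is the second claim.
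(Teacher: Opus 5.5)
Your proposal is correct and follows the paper's proof. Both interpolate $f$ at the nodes $\sqrt{\lambda_{i}}$ to get $f(\Y)=\Q(\Y)$, then substitute into $\A\hat{\sigma}_{f}\B=\A f(\Y)^{2}$ from Equation~\eqref{EquationAlternativeMeans} and expand the square. Your Step~1 (checking that $\A^{-1}\B\in\mathscr{P}_{n}(\mathbb{D})$ so that $(\A^{-1}\B)^{\frac{1}{2}}$ is defined, including the quaternionic simultaneous diagonalization) and your remark that Proposition~\ref{PropositionExsitenceP} uses only functional calculus supply details the paper leaves implicit; they do not change the route.
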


\begin{proof}

Using that $f\left(\left(\A^{-1}\B\right)^{\frac{1}{2}}\right) = \Q\left(\left(\A^{-1}\B\right)^{\frac{1}{2}}\right)$, it follows from Equation \eqref{PolynomialQ} that
\begin{equation*}
f\left((\A^{-1}\B)^{\frac{1}{2}}\right) = \sum\limits_{i=0}^{r-1} d_{i}(\A^{-1}\B)^{\frac{i}{2}}\,.
\end{equation*}
Therefore, it follows from Equation \eqref{EquationAlternativeMeans} that
\begin{equation*}
\A \hat{\sigma}_{f} \B = \A f\left((\A^{-1}\B)^{\frac{1}{2}}\right)^{2} = \A\left(\sum\limits_{i=0}^{r-1} d_{i}(\A^{-1}\B)^{\frac{i}{2}}\right)^{2} = \sum\limits_{i=0}^{r-1} \sum\limits_{j=0}^{r-1} d_{i}d_{j}\A(\A^{-1}\B)^{\frac{i+j}{2}}\,.
\end{equation*}

\end{proof}

\begin{remark}

Assume that $r = 2$. Then Theorem \ref{TheoremAlternativeMeans} gives
\begin{equation*}
\A \hat{\sigma}_{f} \B = \sum\limits_{i=0}^{1}\sum\limits_{j=0}^{1} d_{i}d_{j}\A\left(\A^{-1}\B\right)^{\frac{i+j}{2}}\,,
\end{equation*}
that is,
\begin{equation*}
\A \hat{\sigma}_{f} \B = d^{2}_{0}\A + 2d_{0}d_{1}\A\left(\A^{-1}\B\right)^{\frac{1}{2}} + d^{2}_{1}\B\,.
\end{equation*}
Since $\left|\Spec(\A^{-1}\B)\right|=2$, Theorem \ref{TheoremGeneralDecomposition} applied to the function $t \to t^{\frac{1}{2}}$ implies that there exist constants $\alpha\,, \beta \in \mathbb{R}$ such that
\begin{equation*}
\left(\A^{-1}\B\right)^{\frac{1}{2}} = \alpha\Id_{n}+\beta\A^{-1}\B\,.
\end{equation*}
Multiplying the previous equation by $\A$ on the left-hand side, we obtain
\begin{equation*}
\A\left(\A^{-1}\B\right)^{\frac{1}{2}} = \alpha\A+\beta\B\,.
\end{equation*}
Substituting into the previous expression yields
\begin{equation*}
\A \hat{\sigma}_{f} \B = \left(d^{2}_{0} + 2\alpha d_{0}d_{1}\right)\A + \left(d^{2}_{1} + 2\beta d_{0}d_{1}\right)\B.\,
\end{equation*}
Hence every alternative mean admits a linearization of the form
\begin{equation*}
\A \hat{\sigma}_{f} \B = \gamma\A+\delta\B
\end{equation*}
whenever $\A\B=\B\A$ and $\left|\Spec(\A^{-1}\B)\right|=2$. In particular, we recover and extend the linearization phenomenon established by Choi, Kim, and Lim for the spectral geometric mean to
the whole class of alternative means in \cite{CHOIKIMLIM}\,.

\noindent More generally, it follows from Proposition \ref{PropositionExsitenceP} that for all $0 \leq p \leq r-2$, there exists a polynomial $\P_{p}$ or degree at most $r-1$ such that
\begin{equation*}
\left(\A^{-1}\B\right)^{\frac{2p+1}{2}} = \P_{p}(\A^{-1}\B)\,.
\end{equation*}
Therefore, we get from Theorem \ref{TheoremAlternativeMeans} that there exists $\alpha_{0}\,, \alpha_{1}\,, \ldots\,, \alpha_{r-1} \in \mathbb{R}$ such that
\begin{equation*}
\A \hat{\sigma}_{f} \B = \sum\limits_{k = 0}^{r-1} \alpha_{k}\A\left(\A^{-1}\B\right)^{k}\,,
\end{equation*}
and where the coefficients $\alpha_{k}\,, 0 \leq k \leq r-1$ can be computed explicitly from the coefficients $d_{j}$ and the interpolation polynomials $\P_{p}$. Consequently, every alternative mean admits a decomposition of the same form as in Theorem \ref{TheoremGeneralDecomposition}\,.

\label{LastRemark}

\end{remark}

\begin{notation}

Let $g: \left(0\,, \infty\right) \to \left(0\,, \infty\right)$ be the function given by
\begin{equation*}
g(t) = f(\sqrt{t})^{2}\,, \qquad \left(t \in \left(0\,, \infty\right)\right)\,.
\end{equation*}

\end{notation}

\noindent We now extend the results of \cite[Theorem~5.1]{CHOIKIMLIM} in the case where $g$ is strictly convex or stricly concave\,.

\begin{theo}

Suppose that $\A\B = \B\A$ and that $g$ is strictly convex or stricly concave. Then $\A \hat{\sigma}_{f} \B$ is linearizable if and only if $\left|\Spec(\A^{-1}\B)\right| \leq 2$\,.

\label{ExtensionCHOIKIMLIM}

\end{theo}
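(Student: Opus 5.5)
The plan is to mimic the proof of Theorem \ref{TheoremSectionTwo}, with the function $g$ playing the role of $f$. The first step is to rewrite the alternative mean in the commuting case as a Kubo--Ando-type expression. By Equation \eqref{EquationAlternativeMeans} we have $\A \hat{\sigma}_{f} \B = \A f\bigl((\A^{-1}\B)^{\frac{1}{2}}\bigr)^{2}$. Since $\A$ and $\B$ commute, $\A^{-1}\B$ is positive definite, so functional calculus gives
\begin{equation*}
f\bigl((\A^{-1}\B)^{\frac{1}{2}}\bigr)^{2} = g(\A^{-1}\B),
\end{equation*}
where both sides are diagonal in a common eigenbasis with entries $f(\sqrt{\lambda_i})^2 = g(\lambda_i)$. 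Hence $\A \hat{\sigma}_{f} \B = \A\, g(\A^{-1}\B)$.

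For the \emph{sufficiency} direction we use Remark \ref{LastRemark}. If $\left|\Spec(\A^{-1}\B)\right| = 2$, that remark already gives $\A \hat{\sigma}_{f} \B = \gamma\A + \delta\B$. Alternatively, the argument of Proposition \ref{PropositionSpectrumTwo} applies directly to $g$: interpolate $g$ at $\lambda_1,\lambda_2$ by $a+bt$, so that $g(\A^{-1}\B) = a\Id_n + b\A^{-1}\B$, and therefore $\A g(\A^{-1}\B) = a\A + b\B$. If $\left|\Spec(\A^{-1}\B)\right| = 1$, then $\B = \lambda\A$ and $\A \hat{\sigma}_{f} \B = g(\lambda)\A$, which is trivially linear.

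For the \emph{necessity} direction, suppose $\A g(\A^{-1}\B) = a\A + b\B$ for some real $a,b$. Multiplying on the left by $\A^{-1}$ gives $g(\A^{-1}\B) = a\Id_n + b\A^{-1}\B$. Now diagonalize simultaneously: $\A^{-1}\B$ is similar (via a unitary in $\U_{\mathbb{D}}$) to $\Lambda = \diag(\lambda_i)$. This yields $g(\lambda) = a + b\lambda$ for every $\lambda \in \Spec(\A^{-1}\B)$. Since $g$ is strictly convex or strictly concave, $g(t) - a - bt$ is strictly convex or concave and so has at most two zeros on $(0,\infty)$. Thus $\left|\Spec(\A^{-1}\B)\right| \leq 2$.

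The only delicate point is the identification $f\bigl((\A^{-1}\B)^{1/2}\bigr)^2 = g(\A^{-1}\B)$ together with the positivity and diagonalizability of $\A^{-1}\B$. In the quaternionic case this needs a little care, but commuting Hermitian matrices are simultaneously unitarily diagonalizable over $\mathbb{D}$ with real eigenvalues, so $\A^{-1}\B = \U\Lambda\U^{*}$ with $\Lambda > 0$, and functional calculus applies exactly as in Remark \ref{RemarkPreliminaries}. The strict convexity/concavity count of zeros is elementary: three collinear points on the graph would contradict strictness.
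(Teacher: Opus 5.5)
Your proposal is correct and follows essentially the same route as the paper: you rewrite $\A\hat{\sigma}_{f}\B = \A\, g(\A^{-1}\B)$, deduce $g(\lambda) = a + b\lambda$ on the spectrum, and use strict convexity or concavity of $g$ to allow at most two eigenvalues, with sufficiency coming from Remark \ref{LastRemark}. Your alternative for sufficiency is a slightly more direct variant: you interpolate $g$ at $\lambda_{1},\lambda_{2}$ as in Proposition \ref{PropositionSpectrumTwo}, which avoids passing through the coefficients $d_{i}$ and the separate linearization of $(\A^{-1}\B)^{\frac{1}{2}}$.
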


\begin{proof}

Assume that there exists $\alpha\,, \beta \in \mathbb{R}$ such that
\begin{equation}
\A \hat{\sigma}_{f} \B = \alpha\A + \beta\B\,.
\label{LinearizationHatSigma}
\end{equation}
Let $\X = \A^{-1}\B$. From the previous discussion, we get 
\begin{equation*}
g(\X) = f\left(\sqrt{\X}\right)^{2}\,,
\end{equation*}
so it follows from Equation \eqref{EquationAlternativeMeans} that 
\begin{equation}
\A \hat{\sigma}_{f} \B = \A f\left(\X^{\frac{1}{2}}\right)^{2} = \A g(\X)\,.
\label{LastEquation}
\end{equation}
Therefore, by combining Equations \eqref{LinearizationHatSigma} and \eqref{LastEquation}, it follows that
\begin{equation*}
g(\X) = \alpha\Id_{n} + \beta\X\,.
\end{equation*}
Let $\lambda \in \Spec(\X)$. Then
\begin{equation*}
g(\lambda) = \alpha + b\lambda\,.
\end{equation*}
Since $g$ is strictly convex or strictly concave, the graph of $g$ cannot intersect the line $t \mapsto \alpha+\beta t$ at more than two points. Hence
\begin{equation*}
\left|\Spec(\X)\right| \leq 2\,.
\end{equation*}
The converse follows from Remark \ref{LastRemark}

\end{proof}

\begin{remark}

For the function
\begin{equation*}
f(t) = \frac{1+t}{2}\,,
\end{equation*}
we have
\begin{equation*}
g(t) = f(\sqrt{t})^{2} = \left(\frac{1+\sqrt{t}}{2}\right)^{2} = \frac{1}{4}+\frac{1}{2}\sqrt{t}+\frac{t}{4}\,.
\end{equation*}
Since
\begin{equation*}
g''(t) = -\frac{1}{8t^{3/2}}\,, \qquad \left(t > 0\right)\,,
\end{equation*}
the function $g$ is strictly concave on $\left(0\,, \infty)\right)$. Hence the previous theorem recovers the linearization result of Choi, Kim, and Lim for the Wasserstein mean \cite[Section~5]{CHOIKIMLIM}\,.

\noindent The theorem also applies to many other alternative means. 
\begin{itemize}
\item If $f(t) = t^{\alpha}$, with $\alpha > 0$, i.e.
\begin{equation*}
g(t) = f(\sqrt{t})^{2} = t^{\alpha}\,.
\end{equation*}
Then, whenever $\alpha \neq 1$, the function $g$ is strictly convex or strictly concave on $\left(0\,, \infty)\right)$, and the same characterization holds\,.
\item If $f(t)=\frac{2t}{1+t}$, we obtain
\begin{equation*}
g(t) = \frac{4t}{(1+\sqrt{t})^{2}}\,,
\end{equation*}
which gives another nontrivial example covered by Theorem \ref{ExtensionCHOIKIMLIM}\,.
\item Finally, if $f(t) = (1-\alpha) + \alpha t$, with $\alpha \in \left(0\,, 1\right)$. Then, the corresponding $g$ is given by the function generating the quasi-Wasserstein means with weight $\alpha$ and power $\frac{1}{2}$
\begin{equation*}
g(t) = \left((1-\alpha) + \alpha\sqrt{t}\right)^{2}\,.
\end{equation*}
The function $g$ is operator monotone, hence operator concave (see \cite{DFK})\,.
\end{itemize}
\end{remark}

In this section, we assumed that $\A$ and $\B$ commute. We can still obtain a decomposition of $\A \hat{\sigma}_{f} \B$ even in the non-commutative setting. Indeed, let $\left(\A\,, \B\right)$ be any pair in $\mathscr{P}_{n}(\mathbb{D})^{2}$. Let $r = \Spec(\A^{-1} \sharp \B)$. Using Proposition \ref{PropositionExsitenceP}, there exists a polynomial $\P(t) = \sum\limits_{i = 0}^{r-1} e_{i}t^{i}$ such that $f(\A^{-1} \sharp \B) = \P(\A^{-1} \sharp \B)$, where the coefficients $e_{i}$ depends on $f$ and the eigenvalues of $\A^{-1} \sharp \B$ (see Equation \eqref{CoefficientsCK}). Then
\begin{equation*}
\A \hat{\sigma}_{f} \B = \sum\limits_{i = 0}^{r-1}\sum\limits_{j = 0}^{r-1} e_{i}e_{j}\left(\A^{-1} \sharp \B\right)^{i}\A\left(\A^{-1} \sharp \B\right)^{j}\,.
\end{equation*}
In contrast with the commuting case, the main difficulty is that the powers
$\left(\A^{-1}\sharp\B\right)^{k}$ do not generally admit a simple expression
in terms of $\A$ and $\B$\,.

\end{document}